\DeclareMathOperator{\charac}{char}
\DeclareMathOperator{\dimirr}{dimirr}
\DeclareMathOperator{\Stab}{Stab}
\DeclareMathOperator{\Gr}{Gr}
\DeclareMathOperator{\Irr}{Irr}
\DeclareMathOperator{\Res}{Res}
\DeclareMathOperator{\Ind}{Ind}
\DeclareMathOperator{\Hom}{Hom}
\title[A Polynomial Result for Dimensions of Representations]{A Polynomial Result for Dimensions of Irreducible Representations of Smooth Affine Group Schemes Over Principal Ideal Local Rings}
\date{\today}
\author{Alexander Jackson}
\newtheorem{theorem}{Theorem}
\newtheorem{lemma}[theorem]{Lemma}
\newtheorem{corollary}[theorem]{Corollary}
\newtheorem{proposition}[theorem]{Proposition}
\newtheorem{conjecture}[theorem]{Conjecture}
\newtheorem{example}[theorem]{Example}
\theoremstyle{definition}
\newtheorem{definition}[theorem]{Definition}
\begin{document}

\begin{abstract}
Denote by $\mathfrak{o}$ the valuation ring of a non-Archimedean local field with prime ideal $\mathfrak{p}$ and finite residue field, and let $r\geq 1$ be an integer. We prove that for every smooth affine group scheme $G$ over $\mathbb{Z}$, the dimension of each irreducible representation of $G(\mathfrak{o}/\mathfrak{p}^r)$ is given by one of finitely many polynomials with coefficients in $\mathbb{Q}$ evaluated at $q=|\mathfrak{o}/\mathfrak{p}|$, provided that the residue characteristic $p=\charac\mathfrak{o}/\mathfrak{p}$ is large and fixed.
\end{abstract}
\maketitle
\begin{figure}[b]
    \makebox[\textwidth][l]{
        \includegraphics[width=0.3\textwidth]{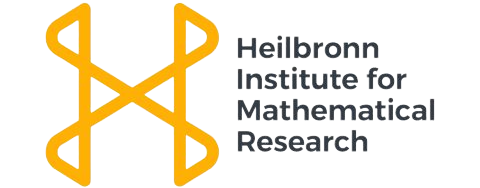}
        \includegraphics[width=0.15\textwidth]{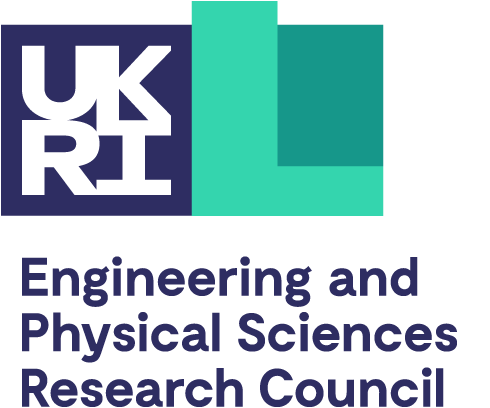}
    }
\end{figure}
\section{Introduction}
For every non-Archimedean local field $F$ with finite residue field, we define 
$\mathfrak{o}$ to be the valuation ring of $F$,
$\mathfrak{p}$ the unique non-zero prime ideal of $\mathfrak{o}$,
$q=|\mathfrak{o}/\mathfrak{p}|$ and
$p=\charac\mathfrak{o}/\mathfrak{p}$.
Throughout, we fix integers $n,r\geq 1$, and we write $\mathfrak{o}_r=\mathfrak{o}/\mathfrak{p}^r$. Given a group, $G$, we denote by $\Irr(G)$ the set of irreducible complex representations of $G$ up to isomorphism, and $\dimirr(G)=\{\dim\rho\mid\rho\in\Irr(G)\}$.
\par We concern ourselves with the problem of describing the smooth representations of $\mathrm{GL}_n(\mathfrak{o})$, or equivalently, those which factor through some $\mathrm{GL}_n(\mathfrak{o}_r)$. Green \cite{green} constructed all the representations of $\mathrm{GL}_n(\mathbb{F}_q)$, hence finding all representations in the case $r=1$. In a series of papers, Hill \cite{hilljordan,hillnilpotent,hill95,hillsemisimplecuspidal} developed a method to construct the representations of $\mathrm{GL}_n(\mathfrak{o}_r)$ based on Clifford Theory (see Section~ \ref{Clifford Theory}), in which one chooses an abelian congruence kernel $1+\mathfrak{p}^{i}\mathrm{M}_n(\mathfrak{o}_r)$ as the chosen normal subgroup (here, $i\geq\lceil\frac r2\rceil$). Later, Stasinski \cite{stasinski08} described the representations of~$\mathrm{GL}_2(\mathfrak{o}_r)$ and Singla \cite{singla10} described the representations of~$\mathrm{GL}_n(\mathfrak{o}_2)$ for $n\leq 4$. In a more recent preprint, Onn, Prasad and Singla \cite{onnprasadsingla23} have constructed the representations of $\mathrm{GL}_3(\mathfrak{o}_r)$. Krakovski, Onn and Singla \cite{krakovskionnsingla} classified all the regular representations of $\mathrm{GL}_n(\mathfrak{o}_r)$ (as defined by Hill in \cite{hill95}) where $p\neq 2$, and in independent work, Stasinski and Stevens \cite{stasinskistevens} did the same for arbitrary $p$.
\par Onn \cite{onn08} made the following conjecture:
\begin{conjecture}\label{onn weak conjecture}
If $F,F'$ are non-Archimedean local fields with rings of integers $\mathfrak{o},\mathfrak{o}'$ of the same finite residue cardinality $q$, there is an isomorphism of group algebras $\mathbb{C}[\mathrm{GL}_n(\mathfrak{o}_r)]\cong\mathbb{C}[\mathrm{GL}_n(\mathfrak{o}'_r)]$, or equivalently, a dimension-preserving bijection
\[\Irr(\mathrm{GL}_n(\mathfrak{o}_r))\longleftrightarrow \Irr(\mathrm{GL}_n(\mathfrak{o}'_r)).\]
\end{conjecture}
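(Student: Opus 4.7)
The plan is to reduce Conjecture~\ref{onn weak conjecture}, in the regime where the residue characteristic $p$ is large and fixed, to the main polynomial theorem of the paper applied to the smooth affine group scheme $G=\mathrm{GL}_n$ over $\mathbb{Z}$. A dimension-preserving bijection exists if and only if, for every $d\in\mathbb{N}$, the number $\#\{\rho\in\Irr(\mathrm{GL}_n(\mathfrak{o}_r))\mid\dim\rho=d\}$ depends only on $q$ and not on the particular choice of $\mathfrak{o}$. I would split this into two subproblems: (a) the set $\dimirr(\mathrm{GL}_n(\mathfrak{o}_r))$ is determined by $q$, and (b) the multiplicity of each dimension is determined by $q$.

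For (a), the main theorem provides finitely many polynomials $f_1,\ldots,f_N\in\mathbb{Q}[X]$ such that every irreducible dimension has the form $f_i(q)$ for some $i$; thus $\dimirr(\mathrm{GL}_n(\mathfrak{o}_r))\subseteq\{f_i(q)\mid 1\leq i\leq N\}$, and one still needs to verify which $f_i$ are actually attained, which may be handled one polynomial at a time. For (b), I would parametrize $\Irr(\mathrm{GL}_n(\mathfrak{o}_r))$ via Clifford theory applied to the abelian congruence kernel $K=1+\mathfrak{p}^{\lceil r/2\rceil}\mathrm{M}_n(\mathfrak{o}_r)$, as in Hill's approach, identify $\Irr(K)$ with $\mathrm{M}_n(\mathfrak{o}_{r-\lceil r/2\rceil})$ via a trace pairing, and view each irreducible of $\mathrm{GL}_n(\mathfrak{o}_r)$ as a conjugation-orbit of such a matrix together with an irreducible of the inertia subgroup. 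Counting irreducibles of each dimension then refines to counting orbits of matrices of each geometric ``type'', weighted by the multiplicities of irreducibles of certain smaller subquotients.

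The main obstacle will be proving that this orbit-and-stabilizer data varies polynomially in $q$ in a way that is uniform in the choice of $F$. My plan is to stratify $\mathrm{M}_n(\mathfrak{o}_{r-\lceil r/2\rceil})$ into finitely many pieces, each cut out by a first-order condition in the Denef--Pas language for henselian valued fields, on which the orbit counts and the required inner multiplicities are both polynomial in $q$, depending on $q$ alone. An Ax--Kochen-style transfer principle (where the hypothesis that $p$ is large is essential) will then ensure that these polynomial counts agree between $\mathfrak{o}$ and $\mathfrak{o}'$, producing the dimension-preserving bijection and hence the group-algebra isomorphism.
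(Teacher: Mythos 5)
The statement you are trying to prove is Conjecture~\ref{onn weak conjecture}: it is an open conjecture of Onn, and the paper does not prove it. What the paper establishes is only the much weaker Theorem~\ref{weak polynomial for large p}: for $p$ large and fixed, the \emph{set} of dimensions $\dimirr(\mathrm{GL}_n(\mathfrak{o}_r))$ is \emph{contained} in the values at $q$ of finitely many polynomials. That result says nothing about which polynomial values are actually attained, nothing about multiplicities, and nothing about small $p$, so it cannot yield the dimension-preserving bijection of the conjecture. Your plan acknowledges this but then defers exactly the open content to later steps: in (a), ``verify which $f_i$ are actually attained, which may be handled one polynomial at a time'' is not an argument, and in (b) the claim that multiplicities are polynomial in $q$ is precisely (the hard part of) Conjecture~\ref{onn conjecture}, which is known only for $n=2$ (Onn), $r=1$ (Green), $r=2$ with $n\le 4$ (Singla), and for regular representations.

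Two of your intermediate steps also conceal genuine obstructions rather than technical work. First, the Clifford-theoretic parametrization over the congruence kernel $1+\mathfrak{p}^{\lceil r/2\rceil}\mathrm{M}_n(\mathfrak{o}_r)$ reduces the problem to understanding $\Irr$ of the inertia (stabilizer) subgroups above each character; for general $n$ and odd $r$ this is exactly where all known constructions stop (hence the restriction in the literature to regular characters, $n\le 3$, or $r\le 2$), so ``counting irreducibles of certain smaller subquotients'' is not available data. Second, the Denef--Pas/Ax--Kochen-style transfer you invoke is essentially the content of Hadas's theorem cited in the introduction, and it is only known for unramified $F/\mathbb{Q}_p$ or $\mathbb{F}_p((t))$ with $p$ large; the conjecture compares arbitrary local fields of the same residue cardinality, including ramified ones and small $p$, and no transfer principle uniform in that generality is currently known. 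So the proposal is a research program whose unproved steps coincide with the open problem, not a proof, and it does not correspond to anything proved in this paper.
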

Also in \cite{onn08}, Onn conjectured that the dimensions of the representations, along with their frequencies, are given by polynomials in $q$. We suggest the following more precise restatement of these conjectures:
\begin{conjecture}\label{onn conjecture}
Let $\mathfrak{O}$ denote the set of rings which are the valuation ring of a non-Archimedean local field, up to isomorphism. Let $n,r\geq 1$ be given. There exist $k\geq 1$, polynomials \[d_1(x),\dots,d_k(x)\in\mathbb{Z}[x]\setminus\{0\},m_1(x),\dots,m_k(x)\in\mathbb{Q}[x]\setminus\{0\},\]
and for all $\mathfrak{o}\in\mathfrak{O}$ and $i\in\{1,\dots,k\}$, there exist subsets $\mathcal{R}_{i,\mathfrak{o}}$ of $\Irr(\mathrm{GL}_n(\mathfrak{o}_r))$
such that $\bigcup_{i=1}^k\mathcal{R}_{i,\mathfrak{o}}=\Irr(\mathrm{GL}_n(\mathfrak{o}_r))$ is a disjoint union,
and
\begin{enumerate}
\item $\text{for all }\rho\in\mathcal{R}_{i,\mathfrak{o}},\dim\rho=d_i(|\mathfrak{o}_1|)$, and
\item $|\mathcal{R}_{i,\mathfrak{o}}|=m_i(|\mathfrak{o}_1|)$.
\end{enumerate}
\end{conjecture}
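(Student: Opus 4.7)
The plan is to apply Clifford theory with abelian congruence kernels recursively, combined with model-theoretic uniformity in the Denef--Pas language, to express the dimensions as polynomials in $q$ drawn from a finite list. Fix $i=\lceil r/2\rceil$ and let $K\triangleleft\mathrm{GL}_n(\mathfrak{o}_r)$ denote the kernel of reduction modulo $\mathfrak{p}^i$; this is abelian since $[K,K]\subseteq K_{2i}=1$. For $p$ large the truncated exponential identifies $K$ equivariantly with $\mathfrak{gl}_n(\mathfrak{o}_{r-i})$, and Pontryagin duality identifies $\widehat{K}$ with $\mathfrak{gl}_n^*(\mathfrak{o}_{r-i})$, turning conjugation into the coadjoint action. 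Clifford theory then decomposes each $\rho\in\Irr(\mathrm{GL}_n(\mathfrak{o}_r))$ as $\rho=\Ind_{I_\psi}^{\mathrm{GL}_n(\mathfrak{o}_r)}\tilde\sigma$ for some $\psi\in\widehat{K}$ with inertia subgroup $I_\psi=\Stab(\psi)$ and some irreducible $\tilde\sigma$ lying above $\psi$, producing the dimension formula $\dim\rho=[\mathrm{GL}_n(\mathfrak{o}_r):I_\psi]\cdot\dim\tilde\sigma$. The second factor breaks up further through a Heisenberg-type extension, and is ultimately governed by an irreducible representation $\sigma$ of a quotient $H_\psi$ of $I_\psi/K$.

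The structural observation driving the recursion is that $H_\psi$ is, for $p$ large, the $\mathfrak{o}_i$-points of a smooth affine group scheme, namely the scheme-theoretic stabilizer of a linearization of $\psi$. Thus computing $\dim\sigma$ is an instance of the same problem for a possibly different smooth affine group scheme, with smaller depth parameter. I would therefore perform the induction on $r$ in the generality stated in the abstract---over arbitrary smooth affine group schemes over $\mathbb{Z}$---so that the recursion stays within the class. To control the index $[\mathrm{GL}_n(\mathfrak{o}_r):I_\psi]$ and the number of orbits of each type, stratify $\mathfrak{gl}_n^*$ by the isomorphism type of the scheme-theoretic stabilizer; each stratum is definable in the Denef--Pas language over $\mathfrak{o}$, so the Denef--Pas theorem combined with Ax--Kochen transfer yields that its cardinality modulo $\mathfrak{p}^{r-i}$ is a polynomial in $q$ from a finite family, for $p$ large and fixed. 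Combining this with the inductive computation of $\dim\sigma$ exhibits $\dim\rho$ as one of finitely many polynomials in $q$, confirming part~(1) of the conjecture.

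The principal obstacle will be keeping the recursion finite: the stabilizer scheme changes at each step, so one must show that only finitely many isomorphism types of smooth affine group scheme appear as iterated stabilizers, or else broaden the inductive hypothesis to cover all such schemes of bounded dimension simultaneously. I expect to handle this through a Jordan-type classification of coadjoint orbits valid for large $p$, together with smoothness of each stratum, which allows the class of stabilizer schemes to be catalogued once and for all. Part~(2) of the conjecture, on polynomial multiplicities, should then follow from the same stratification by a finer bookkeeping that groups orbits yielding coincident dimensions; this bookkeeping will be the secondary main difficulty, because polynomial coincidences between distinct strata can force dimension-indexing polynomials to be identified or distinguished in ways that the stratification does not see.
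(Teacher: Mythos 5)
You have not proved the statement, and indeed the paper does not either: the statement is a conjecture, and the paper only establishes the much weaker Theorem \ref{weak polynomial for large p} (dimensions only, containment in a finite list of polynomial values, residue characteristic $p$ large and fixed, with the list allowed to depend on $p$ and no multiplicity claim). Your text is a programme rather than a proof. The two ``obstacles'' you name at the end --- showing that only finitely many isomorphism types of stabilizer group schemes occur in the recursion, and the orbit/multiplicity bookkeeping needed for part (2) --- are exactly the open difficulties, and they are not resolved but deferred. Several structural claims are also asserted without argument: that the scheme-theoretic stabilizer of a linearization of $\psi$ is smooth affine with $H_\psi$ equal to its $\mathfrak{o}_i$-points for large $p$, that the relevant strata are definable so that Denef--Pas/Ax--Kochen uniformity applies, and that the Heisenberg-type step behaves uniformly in $q$.

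More seriously, the plan to run the induction ``over arbitrary smooth affine group schemes over $\mathbb{Z}$'' cannot close, because the conjecture is false in that generality: as the paper points out, Simpson and Frame show that $\mathrm{SL}_3$ has a family of irreducible representations whose dimension depends on the congruence class of $q$ modulo $3$, and Halasi and P\'alfy exhibit a pattern group whose number of conjugacy classes (hence of irreducible characters) is not polynomial in $q$. So the broadened inductive hypothesis you rely on --- in particular the multiplicity statement (2) for the iterated stabilizer schemes --- is provably false, and the recursion collapses. In addition, every tool you invoke (the truncated exponential identifying the congruence kernel with $\mathfrak{gl}_n(\mathfrak{o}_{r-i})$, extension of $\psi$ to its inertia group, Denef--Pas definability with Ax--Kochen transfer) requires $p$ large, whereas the conjecture quantifies over all $\mathfrak{o}\in\mathfrak{O}$, including residue characteristics $2$ and $3$. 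At best your route could yield a statement of the strength of the paper's Theorem \ref{weak polynomial for large p}; note that the paper reaches that result by a different mechanism (the Greenberg functor to pass to algebraic groups over finite fields, induction on dimension via Clifford theory relative to the unipotent radical, Deligne--Lusztig theory following Geck for the reductive layer, and Hadas's complexity bounds together with Brion--Peyre PORC counting for the indices $[G(\mathbb{F}):K(\mathbb{F})]$), not by congruence-kernel Clifford theory and model-theoretic uniformity.
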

A \textbf{twist} of a representation is the tensor product with a one-dimensional representation. A representation of $\mathrm{GL}_n(\mathfrak{o}_r)$ is \textbf{twist primitive} if none of its twists factor through the quotient $\mathrm{GL}_n(\mathfrak{o}_{r-1})$. One can also modify Conjecture \ref{onn conjecture} by replacing $\Irr(\mathrm{GL}_n(\mathfrak{o}_r))$ with the set of twist primitive representations of $\mathrm{GL}_n(\mathfrak{o}_r)$. In this case, \cite[Conjecture 1.5]{onn08} predicts that the $d_i$ can be chosen to have degree at most $\binom n2 r$.
Note that we allow for the possibility that some $\mathcal{R}_{i,\mathfrak{o}}$ is empty, as the following example shows:
\begin{example}
Set $n=2,r=1$, that is, we look at the representations of $GL_2(\mathbb{F}_q)$. It is well known that we require $k=4$ and the following polynomials:
\begin{center}
\begin{tabular}{c|c|c}
$i$&$d_i(x)$&$m_i(x)$\\\hline
$1$&$1$&$x-1$\\
$2$&$x-1$&$\frac 12 x(x-1)$\\
$3$&$x$&$x-1$\\
$4$&$x+1$&$\frac 12(x-1)(x-2)$
\end{tabular}.
\end{center}
We see that $m_4(2)=0$, i.e. $\mathcal{R}_{4,\mathfrak{o}}=\emptyset$ whenever $\mathfrak{o}$ has residue cardinality equal to $2$.
\end{example}
\par Further in \cite{onn08}, Onn constructed the representations of $\mathrm{Aut}_\mathfrak{o}(\mathfrak{o}_{r_1}\oplus\mathfrak{o}_{r_2})$ and thus proved Conjecture \ref{onn conjecture} for $n=2$. Singla \cite{singla10} proved Conjecture \ref{onn weak conjecture} for $r=2$ and Conjecture \ref{onn conjecture} in the case when $r=2$ and $n\leq 4$; note that Green \cite{green} had already proven Conjecture \ref{onn conjecture} for $r=1$ by giving an explicit formula for dimensions of irreducible representations of $\mathrm{GL}_n(\mathbb{F}_q)$. Stasinski and Vera-Gajardo \cite{sv-g19} proved Conjecture \ref{onn weak conjecture} for $r=2$, where $\mathrm{GL}_n$ is replaced by an arbitrary reductive group scheme $G$ and $p$ is a very good prime for $G$. Hadas \cite{hadas24} proved Conjecture \ref{onn weak conjecture} for all~$r\geq 1$, all smooth affine group schemes $G$ in place of $\mathrm{GL}_n$, and local fields $F$ that are unramified over $\mathbb{Q}_p$ or $\mathbb{F}_p((t))$ and where $p$ is a large enough prime.
\par We note that Conjecture \ref{onn conjecture} does not hold if $\mathrm{GL}_n$ is replaced by an arbitrary smooth affine group scheme. Indeed, this is false for $\mathrm{SL}_3$, as it was shown by Simpson and Frame \cite{simpsonframe73} that $\mathrm{SL}_3$ has a class of irreducible representations whose dimension depends on the congruence class of $q$ modulo $3$. Moreover, Halasi and P\'alfy \cite{halasipalfy11} gave an example of a so-called ``pattern group" $G$ such that the number of conjugacy classes of $G(\mathbb{F}_q)$ is not given by a polynomial in $q$. However, the question of whether the number of conjugacy classes of the unitriangular group $\mathrm{U}_n(\mathbb{F}_q)$ is given by a polynomial in $q$ is still open; this is a well-known conjecture of Higman \cite{higman60part1}.
\par In this paper, we prove a modified version of Conjecture \ref{onn conjecture} for dimensions only, namely, that there exist finitely many polynomials which when evaluated at $q$, contain all the dimensions of the representations of $\mathrm{GL}_n(\mathfrak{o}_r)$, where $p$ is assumed to be large enough. In fact our method of proof, which is based on work of Hadas \cite{hadas24}, requires us to consider arbitrary linear algebraic groups, not only $\mathrm{GL}_n$. More precisely, we show the following:
\begin{theorem}[Weak Polynomial Conjecture for large $p$]\label{weak polynomial for large p}
Let $G$ be a smooth affine group scheme of finite type over $\mathbb{Z}$ and $r\geq 1$ an integer. There exists $C>0$ such that for every prime $p>C$, there exists a finite set of polynomials $R\subseteq \mathbb{Q}[x]$ such that for every non-Archimedean local field $F$ with valuation ring $\mathfrak{o}$ and finite residue field of characteristic $p$,
\[\mathrm{dimirr}(G(\mathfrak{o}_r))\subseteq\left\{m(|\mathfrak{o}_1|)\mid m\in R\right\}.\]
\end{theorem}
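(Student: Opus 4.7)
The plan is a Clifford-theoretic induction on $r$, combined with the Kirillov orbit method (valid for large $p$). The strategy adapts Hadas's method for Conjecture \ref{onn weak conjecture} to the statement about dimensions only, which obviates the need to match characters across fields.

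For the base case $r=1$, I would use that for $p$ larger than the bad primes of $G$, the fiber $G_{\mathbb{F}_p}$ admits a Levi decomposition $G_{\mathbb{F}_p}=L\ltimes U$ with $L$ reductive and $U$ unipotent. Deligne--Lusztig theory produces finitely many polynomial-in-$q$ dimensions for $\Irr(L(\mathbb{F}_q))$, and the Kirillov orbit method applied to the central series of $U(\mathbb{F}_q)$ does the same for $\Irr(U(\mathbb{F}_q))$; Clifford theory then combines these to handle $G(\mathbb{F}_q)$.

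For the inductive step at $r\geq 2$, set $i=\lceil r/2\rceil$, so that $K_i=\ker(G(\mathfrak{o}_r)\to G(\mathfrak{o}_i))$ is abelian. By Clifford theory every $\rho\in\Irr(G(\mathfrak{o}_r))$ has the form $\Ind_{S_\alpha}^{G(\mathfrak{o}_r)}\tilde\rho$ for some character $\chi_\alpha$ of $K_i$ with stabilizer $S_\alpha$ and an irreducible extension $\tilde\rho$, so that $\dim\rho=[G(\mathfrak{o}_r):S_\alpha]\cdot\dim\tilde\rho$. For $p$ large, the truncated exponential induces a $G(\mathfrak{o}_{r-i})$-equivariant isomorphism $K_i\cong\mathfrak{g}(\mathfrak{o}_{r-i})$ of additive groups, so characters of $K_i$ correspond to elements $\alpha\in\mathfrak{g}^*(\mathfrak{o}_{r-i})$ under the coadjoint action. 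Consequently $[G(\mathfrak{o}_r):S_\alpha]$ is the size of the coadjoint orbit of $\alpha$, and $S_\alpha/K_i$ identifies with the centralizer of $\alpha$ in $G(\mathfrak{o}_{r-i})$. Each such centralizer is of the form $H_\alpha(\mathfrak{o}_{r-i})$ for a smooth affine group scheme $H_\alpha/\mathfrak{o}$, and since $r-i<r$, the induction hypothesis (strengthened to allow base rings other than $\mathbb{Z}$) supplies finitely many polynomial-in-$q$ values of $\dim\tilde\rho$, modulo a projective-to-linear correction that is trivial for $p$ large. Multiplying the two finite families of polynomials yields the desired $R$.

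The main obstacle is a uniformity statement: showing that the coadjoint orbits in $\mathfrak{g}^*(\mathfrak{o}_{r-i})$ stratify into finitely many constructible pieces on which both the orbit size and the isomorphism type of the centralizer scheme are uniform polynomials in $q$, independently of which local field $F$ is chosen. I would prove this via Greenberg-style functorial arguments together with quantifier elimination for henselian valued fields, which allow one to transfer the constructible stratification of the coadjoint representation of $G$ from $\Spec\mathbb{Z}$ down to $\mathfrak{o}_{r-i}$ in a way compatible with varying $\mathfrak{o}$. A related subtlety is that the centralizers $H_\alpha$ are only naturally defined over $\mathfrak{o}$, so the induction must be framed uniformly in the base ring rather than only over $\mathbb{Z}$.
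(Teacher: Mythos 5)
Your route is genuinely different from the paper's. The paper does not induct on the level $r$ at all: it applies the Greenberg functor once, turning $G(\mathfrak{o}_r)$ into $\mathscr{G}(\mathbb{F}_q)$ for an algebraic group $\mathscr{G}$ over the residue field of bounded dimension and complexity, and then proves the polynomial statement for \emph{all} such algebraic groups (Theorem \ref{hadas polynomial version}) by induction on dimension, using Clifford theory relative to the unipotent radical, Geck's Deligne--Lusztig argument for the reductive case, Hadas's result for unipotent groups, and Brion--Peyre's PORC counting together with Lemma \ref{p-PORC lemma} to control the stabilizer indices; ramified fields are then handled by reducing to tame ramification index $e<r$ and invoking Theorem \ref{greenberg functor and extensions of o}. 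Measured against this, your Kirillov-style induction on $r$ has gaps at exactly the points where the subject is hard.

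The central gap is your claim that each stabilizer quotient is $H_\alpha(\mathfrak{o}_{r-i})$ for a \emph{smooth} affine group scheme $H_\alpha$ over $\mathfrak{o}$. Scheme-theoretic centralizers of arbitrary $\alpha\in\mathfrak{g}^*(\mathfrak{o}_{r-i})$ need not be smooth or flat, are a priori only defined over the non-reduced ring $\mathfrak{o}_{r-i}$, and vary wildly with $\alpha$; so the strengthened induction hypothesis you invoke simply does not apply to them. Controlling these centralizers uniformly is precisely why only the \emph{regular} representations have been classified in general (\cite{krakovskionnsingla,stasinskistevens}). Relatedly, $S_\alpha/K_i$ is not the centralizer of $\alpha$ in $G(\mathfrak{o}_{r-i})$ but its full preimage in $G(\mathfrak{o}_i)$; for odd $r$ this leaves a Heisenberg-type layer between levels $r-i$ and $i$ that your dimension bookkeeping (and your appeal to an $H^2$-vanishing for these non-algebraic groups) does not address. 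The ``main obstacle'' you name --- a constructible stratification of the coadjoint space on which orbit sizes and centralizer types are uniformly polynomial in $q$ across all $F$ --- is not a technical afterthought but essentially the entire difficulty, and no argument is given for it; the paper sidesteps it because, after the Greenberg functor, stabilizers of characters of the unipotent radical are algebraic subgroups of bounded complexity (Lemma \ref{stabiliser is algebraic subgroup}), whose indices are $p$-PORC by Proposition \ref{brion peyre}. Finally, even your base case $r=1$ is incomplete: when $\Res$ to $U(\mathbb{F}_q)$ is non-isotypic, $\dim\rho$ acquires the factor $[G(\mathbb{F}_q):\Stab(\chi)]$, and showing these indices take only finitely many polynomial values is the nontrivial step (Step 2(b) of the paper's proof); ``Clifford theory then combines these'' does not cover it.
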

\par The structure of the paper is as follows: Section~ \ref{Clifford Theory} recalls preliminaries on Clifford Theory. Suppose $G$ is an algebraic group defined over a finite field $\mathbb{F}$; we will use Clifford Theory to describe representations of $G(\mathbb{F})$ using those of $R_u(G)(\mathbb{F})$. In Section~ \ref{Greenberg Functor}, we state the results we need about the Greenberg functor introduced in \cite{greenberg61}; we refer to \cite{bertapellegonzalezaviles18} for details. In Section~ \ref{Finite Groups of Lie Type}, we outline an argument of Geck \cite{geck11} which gives a weak polynomial result (in the sense of Theorem \ref{weak polynomial for large p}) for reductive group schemes. In Section~ \ref{Proof of the Main Theorem}, we prove an analogue of Theorem \ref{weak polynomial for large p}, where $F$ ranges over unramified extensions of some fixed base field $F_0$ (Corollary \ref{Weak Onn for towers}), and deduce Theorem \ref{weak polynomial for large p} in full generality.
\section{Clifford Theory}\label{Clifford Theory}
Let $G$ be a finite group and $N$ a normal subgroup of $G$. Clifford \cite{clifford37} described how one can build representations of $G$ by parameterising the representations $\psi$ of $N$, and then describing the representations of $G$ which lie above each $\psi$. Later, we shall apply this to the case where $G$ is the $\mathbb{F}$-points of an algebraic group over a finite field $\mathbb{F}$, and $N$ is the $\mathbb{F}$-points of its unipotent radical.
\par We fix some notation. For $\psi\in\Irr(N)$, write \[\Irr(G\mid\psi)=\{\rho\in\Irr(G)\mid\langle\psi,\mathrm{Res}^G_N(\rho)\rangle\neq 0\}.\]
There is an action of $G$ on $\Irr(N)$ by $({^g\chi})(n)=\chi(g^{-1}ng)$.
\par We summarise the results we require from Clifford Theory. Their proofs can be found in \cite[Theorems 6.2, 6.11, 11.7]{isaacs}.
\begin{theorem}[Clifford Theory]\label{clifford theorem}
Let $G$ be a finite group and $N$ a normal subgroup of $G$. Then the following hold:
\begin{enumerate}
\item For all $\rho\in\Irr(G)$, there exists an integer $e\geq 1$ and a $G$-orbit $\Omega\subseteq\Irr(N)$ such that
\[\mathrm{Res}^G_N(\rho)=e\bigoplus_{\psi\in\Omega}\psi.\]
\item There is a bijection
\[\Irr(\mathrm{Stab}_G(\psi)\mid\psi)\to\Irr(G\mid\psi)\]
given by
\[\theta\mapsto\mathrm{Ind}^G_{\mathrm{Stab}_G(\psi)}(\theta).\]
\item If $\psi$ extends to a representation $\hat{\psi}$ of $\mathrm{Stab}_G(\psi)$, there is a bijection
\[\Irr(\mathrm{Stab}_G(\psi)/N)\to\Irr(\mathrm{Stab}_G(\psi)\mid\psi)\]
given by
\[\overline{\theta}\mapsto\theta\otimes\hat{\psi},\]
where $\theta=\overline{\theta}\circ\pi$ and $\pi:\Stab_G(\psi)\to\Stab_G(\psi)/N$ is the natural map.
\end{enumerate}
\end{theorem}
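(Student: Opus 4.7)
My plan is to prove the three parts in sequence, following the standard isotypic-component approach. For (1), I take $\rho \in \Irr(G)$ acting on a space $V$ and decompose $V = \bigoplus_{\psi \in \Irr(N)} V_\psi$ into $N$-isotypic components. The key observation is that for any $g \in G$, the operator $\rho(g)$ carries $V_\psi$ isomorphically onto $V_{{}^g\psi}$, because it intertwines the $N$-action on the former with that on the latter obtained via conjugation. Consequently, for any $G$-orbit $\Omega \subseteq \Irr(N)$, the subspace $\bigoplus_{\psi \in \Omega} V_\psi$ is $G$-stable; irreducibility of $V$ forces a unique orbit $\Omega$ to appear, and setting $e = \dim V_\psi / \dim \psi$ (independent of $\psi \in \Omega$ by the above) yields the claimed decomposition.

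For (2), set $H = \Stab_G(\psi)$. In the forward direction, given $\theta \in \Irr(H \mid \psi)$, I apply Mackey's irreducibility criterion to $\Ind^G_H \theta$: one verifies that for every $g \in G \setminus H$, the representations $\theta$ and ${}^g\theta$ have no common constituent when restricted to $H \cap gHg^{-1}$. This is immediate upon further restriction to $N \subseteq H \cap gHg^{-1}$, since $\theta|_N$ is $\psi$-isotypic while ${}^g\theta|_N$ is ${}^g\psi$-isotypic, and ${}^g\psi \neq \psi$ for $g \notin H$. In the reverse direction, given $\rho \in \Irr(G \mid \psi)$ acting on $V$, I take $\theta$ to be the $H$-representation on the $\psi$-isotypic subspace $V_\psi$, which is $H$-stable by (1); Frobenius reciprocity shows $\Ind^G_H \theta \cong \rho$ (the two sides have equal dimension by the orbit-sum formula of (1)), and any proper $H$-submodule of $V_\psi$ would generate a proper $G$-submodule of $V$, contradicting irreducibility of $\rho$.

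For (3), given the extension $\hat\psi$, the representation $\theta \otimes \hat\psi$ restricts on $N$ to $(\dim \theta)$ copies of $\psi$, since $N$ acts trivially on the underlying space of $\theta$ and $\hat\psi|_N = \psi$; hence $\theta \otimes \hat\psi$ lies above $\psi$. The irreducibility of $\theta \otimes \hat\psi$ and the bijectivity of the map then follow from Gallagher's classical character-theoretic argument, which compares the inner product $\langle \chi_\theta \chi_{\hat\psi}, \chi_{\theta'} \chi_{\hat\psi}\rangle_H$ directly with $\langle \chi_{\overline{\theta}}, \chi_{\overline{\theta}'}\rangle_{H/N}$, using that the lifted characters $\chi_\theta, \chi_{\theta'}$ are constant on $N$-cosets of $H$.

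I expect the principal obstacle to be the forward direction of (2): this is where the interplay between $N$ and its stabilizer $H$ is genuinely tested, as Mackey's criterion imposes a disjointness condition on the possibly intricate intersection $H \cap gHg^{-1}$, and one must reduce this to the concrete condition ${}^g\psi \neq \psi$ by descending to $N$. The other two parts, by contrast, are largely formal manipulations of characters and isotypic decompositions once (1) is in hand.
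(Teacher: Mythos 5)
Your proposal is correct in outline, but note that the paper does not prove Theorem \ref{clifford theorem} at all: it simply cites Isaacs (Theorems 6.2, 6.11, 11.7), so any actual argument is necessarily a different route from the text. Your part (1) is the same classical isotypic-component argument as in Isaacs. For part (2), Isaacs' proof of the Clifford correspondence avoids Mackey theory, proceeding instead by Frobenius reciprocity and a multiplicity/degree count relating constituents of $\Ind^G_{\Stab_G(\psi)}\theta$ to constituents of $\Res^G_N$; your route via Mackey's irreducibility criterion, with disjointness checked after further restriction to $N\subseteq \Stab_G(\psi)\cap g\Stab_G(\psi)g^{-1}$ (valid since $N$ is normal, and using that $\theta|_N$ is $\psi$-isotypic by part (1) applied inside the stabiliser), is a standard and perfectly sound alternative, and your inverse map $\rho\mapsto V_\psi$ together with the observation that a proper $H$-submodule of $V_\psi$ generates a proper $G$-submodule does give mutual inverses (to be complete one should also note that any copy of $\theta$ in $\Res\rho$ lies inside $V_\psi$, so the $\psi$-isotypic part of $\Ind\theta$ recovers $\theta$). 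For part (3) you defer to Gallagher's theorem, which is legitimate, but your one-line description of its proof is slightly misleading: constancy of the lifted characters on $N$-cosets is not enough to identify $\langle \theta\otimes\hat\psi,\theta'\otimes\hat\psi\rangle_H$ with $\langle\overline\theta,\overline\theta'\rangle_{H/N}$; one also needs the key identity $\sum_{n\in N}|\hat\psi(hn)|^2=|N|$ for all $h$ (equivalently, $\Ind^{H}_N\psi=\hat\psi\otimes\rho_{H/N}$ with $\rho_{H/N}$ the lifted regular character), which is the actual heart of Gallagher's counting argument. Since you are invoking the classical result rather than reproving it, this is a presentational inaccuracy rather than a genuine gap.
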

In general, one can only say that $\psi$ extends to a \textit{projective representation} of $\Stab_G(\psi)$, which defines a cohomology class $[\alpha]\in H^2(\Stab_G(\psi)/N,\mathbb{C}^\times)$ (see e.g. \cite[Theorem 11.2]{isaacs}). In this case, $\psi$ extends to a representation of $\Stab_G(\psi)$ if and only if $[\alpha]$ is the trivial cohomology class. It is therefore sufficient to require that ${H^2(\Stab_G(\psi)/N,\mathbb{C}^\times)=1}$ in order for an extension to exist. Let $\mu_{p^\infty}$ denote the $p$-power roots of unity in $\mathbb{C}$. If $N$ is a normal $p$-subgroup of $G$ and ${H^2(\Stab_G(\psi)/N,\mu_{p^\infty})=1}$, then $\psi$ extends to a linear representation of $\Stab_G(\psi)$ by \cite[Lemma 9.0.2]{aizenbudavni22}.
\par Suppose $N$ is a normal $p$-subgroup of $G$ such that for every $\psi\in\Irr(N)$, we have $H^2(\Stab(\psi)/N,\mu_{p^\infty})=1$ (this is the situation we will encounter in the proof of Theorem \ref{hadas polynomial version}). Then $\psi$ extends to a representation $\hat{\psi}\in\Irr(\Stab_G(\psi))$ and Theorem \ref{clifford theorem} implies that every $\rho\in\Irr(G)$ satisfies one of the following two cases:
\par The restriction $\Res^G_N\rho$ is isotypic, meaning that $\Res^G_N\rho=e\psi$ for some $e\in\mathbb{N}$. We have $\Stab_G(\psi)=G$, and $\rho=\theta\otimes\hat{\psi}$, for some $\theta\in\Irr(G)$ which factors through $G/N$.
\par The restriction $\Res^G_N\rho$ is non-isotypic, and $\rho$ is induced from a representation of a proper subgroup of $G$.
\section{The Greenberg Functor}\label{Greenberg Functor}
Throughout this section, let $\mathfrak{o}$ be a complete discrete valuation ring with finite residue field $k$, and $\mathfrak{o}'$ an unramified extension of $\mathfrak{o}$ with residue field $k'$.
\par In \cite{greenberg61}, Greenberg constructed a functor, $\mathrm{Gr}^\mathfrak{o}_{r-1}$, mapping schemes of finite type over $\mathfrak{o}_r$ to schemes of finite type over $k$, with the property that \[\mathrm{Gr}^\mathfrak{o}_{r-1}(X)(\mathbb{F}_q)=X(\mathfrak{o}_r).\]
\begin{proposition}
If $X$ is smooth and affine of finite type over $\mathfrak{o}_r$, then $\Gr^\mathfrak{o}_{r-1}(X)$ is smooth and affine of finite type over $k$. Furthermore, $\Gr^\mathfrak{o}_{r-1}$ preserves fibre products, so if $X$ is a group scheme, then $\Gr^\mathfrak{o}_{r-1}(X)$ is also naturally a group scheme over $k$.
\end{proposition}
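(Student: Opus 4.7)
The plan is to exploit the explicit description of the Greenberg functor as an arithmetic analogue of Weil restriction, following the framework of \cite{bertapellegonzalezaviles18}. Concretely, one first produces a ring scheme $\mathcal{O}_r$ over $k$, affine and of finite type (indeed free of rank $r$ as a $k$-scheme), equipped with a functorial identification
\[
\Gr^\mathfrak{o}_{r-1}(X)(R)=X(\mathcal{O}_r(R))
\]
for every $k$-algebra $R$. After choosing a uniformiser $\pi$ of $\mathfrak{o}$ and Teichm\"uller-style lifts of elements of $k$, each element of $\mathfrak{o}_r$ can be written uniquely as $\sum_{j=0}^{r-1}\pi^j a_j$ with $a_j\in k$, and the multiplication of $\mathfrak{o}_r$ translates into explicit polynomial formulas on the tuples $(a_0,\dots,a_{r-1})$; these formulas are precisely the structure maps giving $\mathcal{O}_r$ its ring scheme structure.

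With this in hand, affineness and finite type follow by Zariski-localising $X$ and writing $X=\Spec(\mathfrak{o}_r[x_1,\dots,x_n]/(f_1,\dots,f_m))$. Expanding each $x_i$ as $\sum_{j<r}\pi^j y_{ij}$ and collecting coefficients of powers of $\pi$ converts each relation $f_\ell=0$ into $r$ polynomial relations over $k$ in the variables $y_{ij}$; the resulting closed subscheme of $\mathbb{A}^{nr}_k$ represents $\Gr^\mathfrak{o}_{r-1}(X)$ and is visibly affine of finite type over $k$. Smoothness is then verified via the infinitesimal lifting criterion: given a square-zero thickening $R\twoheadrightarrow R/I$ of $k$-algebras, the induced map $\mathcal{O}_r(R)\twoheadrightarrow\mathcal{O}_r(R/I)$ has nilpotent kernel of bounded order (a fact one reads off the coordinate formulas above), so smoothness of $X$ over $\mathfrak{o}_r$ lifts every $\mathcal{O}_r(R/I)$-point of $X$ to an $\mathcal{O}_r(R)$-point, and translating back yields the required lift of $R/I$-points of $\Gr^\mathfrak{o}_{r-1}(X)$ to $R$-points. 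The main obstacle I anticipate here is the bookkeeping needed to confirm that the kernel of $\mathcal{O}_r(R)\to\mathcal{O}_r(R/I)$ really is nilpotent uniformly in $R$; this is transparent in the unramified case but requires care otherwise.

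Preservation of fibre products is then formal. Since $\mathcal{O}_r$ is representable and limits commute with hom-functors, for any diagram $X\to Y\leftarrow Z$ over $\mathfrak{o}_r$ we have
\[
\Gr^\mathfrak{o}_{r-1}(X\times_Y Z)(R)=X(\mathcal{O}_r(R))\times_{Y(\mathcal{O}_r(R))}Z(\mathcal{O}_r(R)),
\]
which is the functor of points of $\Gr^\mathfrak{o}_{r-1}(X)\times_{\Gr^\mathfrak{o}_{r-1}(Y)}\Gr^\mathfrak{o}_{r-1}(Z)$. The group scheme assertion is then an immediate consequence: multiplication, inverse and identity on $X$ are morphisms expressible purely through fibre products, so they transport verbatim to $\Gr^\mathfrak{o}_{r-1}(X)$, endowing it with a group scheme structure over $k$.
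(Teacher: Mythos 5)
The paper itself gives no argument for this proposition: its ``proof'' is a pointer to Greenberg's original papers (Corollary 1 of Proposition 4 and Corollary 4 of Proposition 6 in \cite{greenberg61}, together with \cite[Corollary 1]{greenberg63}), with \cite{bertapellegonzalezaviles18} offered for a modern treatment. So your sketch is not an alternative to the paper's argument but an attempt to reconstruct the cited one, and in outline it is the right reconstruction: once one has a ring scheme $\mathcal{O}_r$ over $k$ with $\Gr^\mathfrak{o}_{r-1}(X)(R)=X(\mathcal{O}_r(R))$ functorially in the $k$-algebra $R$, preservation of fibre products is formal exactly as you say, affineness and finite type follow by expanding coordinates and equations (no Zariski localisation is even needed, since $X$ is already affine of finite type), and smoothness follows from the infinitesimal criterion. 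Note also that the obstacle you flag at the end of the smoothness step is not where the difficulty lies: for a square-zero ideal $I\subseteq R$ the kernel of $\mathcal{O}_r(R)\to\mathcal{O}_r(R/I)$ consists of vectors with all Witt-type coordinates in $I$, and since every monomial of the multiplication polynomials involves a coordinate from each factor, this kernel is itself square-zero, uniformly in $R$.

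The genuine gap is earlier, in the construction of $\mathcal{O}_r$ itself, which you essentially assume. In mixed characteristic the Teichm\"uller-expansion coordinates you propose do not give polynomial structure maps: adding $\sum_j\pi^j[a_j]$ and $\sum_j\pi^j[b_j]$ produces carries that are polynomial only in $p$-power roots of the $a_j,b_j$, so in these coordinates the addition law involves the inverse of Frobenius, which is a bijection on $k$-points (as $k$ is finite) but not a morphism of $k$-schemes; hence ``the structure maps are explicit polynomial formulas'' is false as stated. The repair is to work in Witt coordinates, and, when $\mathfrak{o}$ is ramified over $W(k)$, with a ramified Witt/Greenberg-algebra construction presenting $\mathcal{O}_r$ as a quotient of a finite free module scheme over a truncated Witt ring scheme; verifying that the resulting $\mathcal{O}_r$ has the asserted functor of points for all $k$-algebras $R$ (not merely $R=k$) is precisely the nontrivial content of Greenberg's construction. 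To complete your proof you must either carry out that Witt-vector bookkeeping (transparent only in the equal-characteristic case, where $\mathcal{O}_r$ is honest Weil restriction along $k[t]/(t^r)\to k$), or invoke the existence of the Greenberg algebra from \cite{greenberg61} or \cite{bertapellegonzalezaviles18} --- at which point you are resting on the same sources the paper cites.
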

\begin{proof}
This result is the content of Corollary 1 of Proposition 4 and Corollary 4 of Proposition 6 in \cite{greenberg61}, and \cite[Corollary 1]{greenberg63}.
\end{proof}
We also require the following result of Nicaise and Sebag \cite[Proposition 3.3]{nicaisesebag08}:
\begin{theorem}\label{greenberg functor and extensions of o}
Let $X$ be a scheme of finite type over $\mathfrak{o}_r$. Observe that $\mathfrak{o}'_r$ is an $\mathfrak{o}_r$-algebra via the natural map $\mathfrak{o}_r\to\mathfrak{o}'_r$; this induces a isomorphism of $k'$-schemes:
\[\mathrm{Gr}^\mathfrak{o}_{r-1}(X)\times_kk'\cong\mathrm{Gr}^{\mathfrak{o}'}_{r-1}(X\times_{\mathfrak{o}_r}\mathfrak{o}'_r).\]
\end{theorem}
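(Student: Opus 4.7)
The plan is to compare the two $k'$-schemes via their functors of points on $k'$-algebras, reducing by Yoneda to an identity between canonical lifting functors. Recall that the Greenberg functor admits a functor-of-points description whereby, for each $k$-algebra $A$, there is a canonical flat $\mathfrak{o}_r$-algebra $L^\mathfrak{o}(A)$ lifting $A$ (constructed using truncated ramified Witt vectors in the mixed-characteristic case, and using $A\otimes_k\mathfrak{o}_r$ in the equicharacteristic case), together with a natural bijection $\Gr^\mathfrak{o}_{r-1}(X)(A)=X(L^\mathfrak{o}(A))$ functorial in $A$ and in $X$.

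Unpacking both sides: for a $k'$-algebra $A$, viewed also as a $k$-algebra by restriction of scalars along $k\hookrightarrow k'$,
\[
(\Gr^\mathfrak{o}_{r-1}(X)\times_kk')(A)=\Gr^\mathfrak{o}_{r-1}(X)(A)=X(L^\mathfrak{o}(A)),
\]
while
\[
\Gr^{\mathfrak{o}'}_{r-1}(X\times_{\mathfrak{o}_r}\mathfrak{o}'_r)(A)=(X\times_{\mathfrak{o}_r}\mathfrak{o}'_r)(L^{\mathfrak{o}'}(A))=X(L^{\mathfrak{o}'}(A)),
\]
where on the right the $\mathfrak{o}_r$-algebra map $L^{\mathfrak{o}'}(A)\to L^{\mathfrak{o}'}(A)$ factors canonically through $\mathfrak{o}'_r$. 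Thus, by Yoneda, it suffices to produce a natural isomorphism $L^\mathfrak{o}(A)\cong L^{\mathfrak{o}'}(A)$ of $\mathfrak{o}_r$-algebras, functorial in the $k'$-algebra $A$, and compatible with the $\mathfrak{o}'_r$-structure on the right.

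The heart of the argument, and the main obstacle, is this compatibility between the two lifting functors. The starting point is that $\mathfrak{o}'_r=L^\mathfrak{o}(k')$ canonically: since $\mathfrak{o}'$ is the unique unramified extension of $\mathfrak{o}$ realising the residue extension $k'/k$, its truncation must coincide with the unique flat $\mathfrak{o}_r$-algebra lifting $k'$. Functoriality of $L^\mathfrak{o}$ applied to the structural map $k'\to A$ then endows $L^\mathfrak{o}(A)$ with a canonical $\mathfrak{o}'_r$-algebra structure via $\mathfrak{o}'_r=L^\mathfrak{o}(k')\to L^\mathfrak{o}(A)$. I would then verify that the resulting functor from $k'$-algebras to $\mathfrak{o}'_r$-algebras satisfies the defining universal property of $L^{\mathfrak{o}'}$, namely producing the canonical flat $\mathfrak{o}'_r$-algebra lift; smoothness and flatness are preserved, and on residue rings both functors restrict to the identity, so the two constructions agree up to canonical isomorphism.

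Once this compatibility is established, combining the two functor-of-points descriptions above produces a natural bijection of $A$-valued points for every affine $k'$-scheme $\Spec A$, and Yoneda upgrades this to the claimed isomorphism of $k'$-schemes. The principal technical work — and the reason the result is attributed to Nicaise–Sebag rather than a one-line observation — lies entirely in verifying the functorial compatibility of the canonical lifting construction with unramified base change, which is delicate in the mixed-characteristic case because the Witt-vector construction must be shown to respect the extension $\mathfrak{o}\subset\mathfrak{o}'$.
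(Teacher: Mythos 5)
The paper itself does not prove this statement: it is quoted verbatim from Nicaise--Sebag (their Proposition~3.3), and the only added content is the remark that their proof never uses finiteness of $\mathfrak{o}'/\mathfrak{o}$ (which matters, since the theorem is later applied with $k'$ an infinite residue extension). So there is no in-paper argument to match yours against; judged on its own terms, your functor-of-points skeleton is the standard and correct one: identify $(\Gr^{\mathfrak{o}}_{r-1}(X)\times_k k')(A)=X(L^{\mathfrak{o}}(A))$ and $\Gr^{\mathfrak{o}'}_{r-1}(X\times_{\mathfrak{o}_r}\mathfrak{o}'_r)(A)=X(L^{\mathfrak{o}'}(A))$ for $k'$-algebras $A$, and reduce by Yoneda to a natural comparison of the two lifting functors. (Note that for Yoneda you need the identity $\Gr(X)(A)=X(L(A))$ for \emph{all} $k$-algebras $A$, not merely perfect ones or fields, since restricting to perfect test algebras cannot detect nilpotents; this does hold by Greenberg's construction, but it should be stated.)

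The genuine gap is in how you justify the crucial compatibility $L^{\mathfrak{o}}(A)\cong L^{\mathfrak{o}'}(A)$. You describe $L^{\mathfrak{o}}(A)$ as ``the canonical flat $\mathfrak{o}_r$-algebra lifting $A$'' and propose to conclude by a ``defining universal property'' of $L^{\mathfrak{o}'}$, checked on residue rings and via preservation of flatness. In mixed characteristic neither ingredient exists: already for $\mathfrak{o}=\mathbb{Z}_p$, $r=2$ and $A=k[x]$ one has $L^{\mathfrak{o}}(A)=W_2(k[x])$, whose $p$-torsion is strictly larger than $pW_2(k[x])$, so $L^{\mathfrak{o}}(A)$ is \emph{not} flat over $\mathfrak{o}_r$; and there is no universal property of the Greenberg lifting functor of the shape you invoke --- two functors that agree on residue rings certainly need not agree. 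The uniqueness-of-flat-lift argument is valid only for \'etale $k$-algebras such as $A=k'$ itself, which does give $L^{\mathfrak{o}}(k')\cong\mathfrak{o}'_r$, but it cannot be bootstrapped to general $A$. The step has to be done by direct computation with the explicit construction of the Greenberg algebra: in equal characteristic $L^{\mathfrak{o}}(A)=A\otimes_k\mathfrak{o}_r$ and $\mathfrak{o}'_r\cong k'\otimes_k\mathfrak{o}_r$, so both sides are naturally $A\otimes_k\mathfrak{o}_r$; in mixed characteristic one uses the truncated Witt-vector description, namely that the Witt ring scheme satisfies $W_{m,k}\times_k k'\cong W_{m,k'}$ (the Witt polynomials have integer coefficients) together with $\mathfrak{o}'_r\cong W_m(k')\otimes_{W_m(k)}\mathfrak{o}_r$ for suitable $m$, and then associativity of the tensor product identifies the two lifting functors compatibly with their $\mathfrak{o}'_r$-structures. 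This explicit verification is exactly the content of the cited Nicaise--Sebag proposition, so as written your proposal re-derives the easy reductions but leaves the actual substance of the theorem unproved.
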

This result is stated only for finite extensions $\mathfrak{o}'/\mathfrak{o}$ in \cite{nicaisesebag08}, however the proof does not make use of the assumption that the extension is finite.
\section{Finite Groups of Lie Type}\label{Finite Groups of Lie Type}
We present an argument of Geck \cite{geck11} showing that reductive groups have the ``weak polynomial" property of Theorem \ref{weak polynomial for large p}, that is:
\begin{theorem}\label{weak polynomial for lie type}
Let $q$ be a power of a prime and $G$ be a connected reductive linear algebraic group over $\mathbb{F}_q$. There exists a finite set $R\subseteq\mathbb{Q}[x]$ of polynomials such that for every $d\geq 1$,
\[\dimirr(G(\mathbb{F}_{q^d}))\subseteq\{m(q^d)|m\in R\}.\]
\end{theorem}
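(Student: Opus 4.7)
The plan is to invoke Lusztig's Jordan decomposition of characters of finite reductive groups. Let $G^*$ denote the Langlands dual group of $G$, with its induced $\mathbb{F}_q$-structure, and write $F$ for the $q$-power Frobenius, so that $G(\mathbb{F}_{q^d})=G^{F^d}$. Lusztig's theorem partitions
\[\Irr(G^{F^d})=\bigsqcup_{(s)}\mathcal{E}(G^{F^d},(s)),\]
where $(s)$ runs over $(G^*)^{F^d}$-conjugacy classes of semisimple elements of $(G^*)^{F^d}$. Each series $\mathcal{E}(G^{F^d},(s))$ is in bijection (with a mild correction in the non-simply-connected case handled by Bonnaf\'e) with the set of unipotent characters of $C_{G^*}(s)^{F^d}$, and the dimension formula
\[\dim\rho_{(s,\lambda)} = \bigl[(G^*)^{F^d} : C_{G^*}(s)^{F^d}\bigr]_{p'}\cdot\dim\lambda\]
holds, where $\lambda$ is the unipotent character matched to $\rho_{(s,\lambda)}$.

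First I would show that, as $s$ varies over semisimple elements of $(G^*)^{F^d}$ and $d$ varies over positive integers, only finitely many isomorphism types of the pair $(C_{G^*}(s), F^d|_{C_{G^*}(s)})$ arise. Centralizers of semisimple elements in $G^*$ are pseudo-Levi subgroups, and up to $G^*$-conjugacy there are only finitely many such subgroups, as they are parameterised by proper subsets of the extended Dynkin diagram of $G^*$. For each pseudo-Levi $C$, the Frobenius $F^d|_C$ decomposes as a standard $q^d$-Frobenius composed with a diagram automorphism of finite order, so as $d$ varies only finitely many pairs $(C, F^d|_C)$ arise up to $G^{*F^d}$-conjugacy.

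Next I would apply Lusztig's classification of unipotent characters of a connected reductive group: for each such pair $(C, \sigma)$, the dimensions of the finitely many unipotent characters of $C^\sigma$ are the values at $q^d$ of polynomials in $\mathbb{Q}[x]$ (the generic degrees) depending only on the type of $(C, \sigma)$. The orders $|G^{F^d}|$ and $|C^{F^d}|$ are themselves values at $q^d$ of the order polynomials attached to their types, and hence the prime-to-$p$ index $[(G^*)^{F^d} : C^{F^d}]_{p'}$ is the value at $q^d$ of a polynomial in $\mathbb{Q}[x]$ drawn from a finite list as the type of $C$ varies. The finite set $R \subseteq \mathbb{Q}[x]$ is then formed by taking all products of such index polynomials with generic degree polynomials over the finitely many types.

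The main technical obstacle is the bookkeeping required to handle Frobenius twists on centralizers uniformly in $d$, together with (when $G$ is not simply connected) the possible non-connectedness of $C_{G^*}(s)$, which forces one to pass to $C_{G^*}(s)^\circ$ and track the contribution of the component group to the character degrees. These subtleties are precisely what Geck's argument addresses, and the paper can defer to it rather than reprove them.
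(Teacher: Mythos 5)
Your proposal is correct in outline, but it takes a genuinely different route from the paper. The paper reproduces Geck's elementary argument, which stays entirely inside Deligne--Lusztig theory: the regular character is uniform, $\chi_{\mathrm{reg}}=\frac{1}{|\mathbf{W}|}\sum_{w}\sum_{\theta}R^\theta_w(1)R^\theta_w$, so $\dim\rho=\frac{1}{|\mathbf{W}|}\sum_{w,\theta}\langle R^\theta_w,\rho\rangle R^\theta_w(1)$; the inner-product bound $\langle R^\theta_w,R^{\theta'}_{w'}\rangle\leq|\mathbf{W}|$, the exclusion theorem for pairs that are not geometrically conjugate, and the bound $|\mathbf{W}|$ on the size of a geometric conjugacy class within a fixed $\Irr(\mathbf{T}_0[w])$ show that this sum has at most $|\mathbf{W}|^2$ nonzero terms with integer coefficients bounded by $|\mathbf{W}|^{1/2}$, while $R^\theta_w(1)=\pm[\mathbf{G}^F:\mathbf{T}_0[w]]_{p'}=\pm f_w(q)$ is polynomial in $q$; hence $\dim\rho$ lies in an explicit finite set of polynomial values. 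This deliberately avoids the Jordan decomposition of characters and the classification of unipotent characters. Your route---Lusztig series, centralizers of semisimple elements in the dual group, generic degrees, and the degree formula $\dim\rho=[(G^*)^{F^d}:C_{G^*}(s)^{F^d}]_{p'}\dim\lambda$---also works and in fact yields sharper information (the actual degrees, not just membership in a finite polynomial list), but it rests on much heavier machinery: Lusztig's classification of unipotent characters and their generic degrees, the extension of Jordan decomposition to groups with disconnected centre (where $C_{G^*}(s)$ may be disconnected), and the generic-group bookkeeping showing that only finitely many twisted types $(C_{G^*}(s),F^d|_{C_{G^*}(s)})$ occur as $s$ and $d$ vary. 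One caveat: your closing claim that these subtleties ``are precisely what Geck's argument addresses'' and can be deferred to the paper's citation is not accurate---Geck's argument never passes through Jordan decomposition and was designed to sidestep exactly those points---so in your approach the disconnected-centralizer and Frobenius-twist issues would need their own references (Lusztig's disconnected-centre paper, Digne--Michel, Bonnaf\'e, Brou\'e--Malle--Michel) rather than a deferral to the argument used here.
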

Denote by $\overline{\mathbb{F}_q}$ the algebraic closure of $\mathbb{F}_q$. Specifying $G$ as in the statement of the theorem is equivalent to specifying a connected reductive group over $\overline{\mathbb{F}_q}$ with some $\mathbb{F}_{q^d}$-rational structure given by Frobenius morphism $F$; the fixed points of $F$ correspond to $G(\mathbb{F}_{q^d})$.
The following result will be useful in many places in subsequent proofs:
\begin{lemma}\label{frobenius factoring}
Let $p$ be a prime and $\mathbf{G}$ an algebraic group over $\overline{\mathbb{F}_p}$ with Frobenius map $F$, and $\mathbf{H}$ a connected $F$-stable algebraic subgroup. Then $\mathbf{G}^F/\mathbf{H}^F\cong(\mathbf{G}/\mathbf{H})^F$.
\end{lemma}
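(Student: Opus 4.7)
The plan is to use the Lang--Steinberg theorem applied to the connected group $\mathbf{H}$. I begin by considering the short exact sequence of algebraic groups
\[
1\longrightarrow\mathbf{H}\longrightarrow\mathbf{G}\longrightarrow\mathbf{G}/\mathbf{H}\longrightarrow 1,
\]
which is $F$-equivariant because $\mathbf{H}$ is $F$-stable. Taking $F$-fixed points is left exact, and it immediately yields an injection $\mathbf{G}^F/\mathbf{H}^F\hookrightarrow(\mathbf{G}/\mathbf{H})^F$; this takes care of one direction.

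For surjectivity, I would pick an arbitrary coset $x\mathbf{H}\in(\mathbf{G}/\mathbf{H})^F$, where $x\in\mathbf{G}$. The condition $F(x)\mathbf{H}=x\mathbf{H}$ means $x^{-1}F(x)\in\mathbf{H}$. Since $\mathbf{H}$ is connected and $F$-stable, I can invoke the Lang--Steinberg theorem for the algebraic group $\mathbf{H}$ with Frobenius $F|_{\mathbf{H}}$: the map $\mathbf{H}\to\mathbf{H}$, $y\mapsto y^{-1}F(y)$, is surjective. Hence there exists $y\in\mathbf{H}$ with $y^{-1}F(y)=x^{-1}F(x)$. Then
\[
F(xy^{-1})=F(x)F(y)^{-1}=x\bigl(x^{-1}F(x)\bigr)F(y)^{-1}=x\bigl(y^{-1}F(y)\bigr)F(y)^{-1}=xy^{-1},
\]
so $xy^{-1}\in\mathbf{G}^F$, and since $y\in\mathbf{H}$ we have $xy^{-1}\mathbf{H}=x\mathbf{H}$. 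This provides the desired preimage and completes the surjection.

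The main subtlety is ensuring that Lang--Steinberg genuinely applies here: it requires $\mathbf{H}$ to be connected, which is precisely the connectedness hypothesis given in the statement, and it requires $F|_{\mathbf{H}}$ to be a Frobenius endomorphism of $\mathbf{H}$, which follows from $\mathbf{H}$ being $F$-stable. There is no real further obstacle; once Lang--Steinberg is in hand, the argument is a one-line cocycle computation as above.
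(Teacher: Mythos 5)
Your argument is correct: it is the standard Lang--Steinberg proof of this fact (injectivity because $\mathbf{H}\cap\mathbf{G}^F=\mathbf{H}^F$, surjectivity by solving $y^{-1}F(y)=x^{-1}F(x)$ in the connected group $\mathbf{H}$), and since the paper states the lemma without proof, your write-up supplies exactly the argument it implicitly relies on. One small caveat: the lemma as stated does not assume $\mathbf{H}$ normal in $\mathbf{G}$, so your ``short exact sequence'' framing is only literally available in that case; for general $\mathbf{H}$ one should read $\mathbf{G}^F/\mathbf{H}^F$ and $(\mathbf{G}/\mathbf{H})^F$ as coset spaces, and your injectivity and surjectivity computations go through verbatim in that setting (in the paper's application, $\mathbf{T}\trianglelefteq N_{\mathbf{G}}(\mathbf{T})$, so the group-quotient reading is also fine).
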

Throughout the rest of this section, bold face capital letters will denote linear algebraic groups over $\overline{\mathbb{F}_p}$. We fix a connected reductive group $\mathbf{G}$ with Weyl group $\mathbf{W}$, maximally split torus $\mathbf{T}_0$ and Frobenius map $F$.
\begin{definition}\cite[2.2]{geck17}
For $w\in\mathbf{W}$, define
\[\mathbf{T}_0[w]=\{t\in\mathbf{T}_0|F(t)=\dot{w}^{-1}t\dot{w}\},\]
where $\dot{w}$ is a lift of $w$ to $N_\mathbf{G}(\mathbf{T}_0)$. Where we need to make reference to the Frobenius map $F$, we write $\mathbf{T}_0[w,F]$.
\end{definition}
Deligne and Lusztig \cite{delignelusztig76} defined the virtual representations, $R^\theta_w$, of the finite group $\mathbf{G}^F$, indexed by $w\in\mathbf{W}$ and $\theta\in\Irr(\mathbf{T}_0[w])$. One defines a variety $X$ acted on the left by $\mathbf{G}^F$ and on the right by $\mathbf{T}_0[w]$; the $l$-adic cohomology groups of this variety are $\overline{\mathbb{Q}_l}[\mathbf{G}^F]-\overline{\mathbb{Q}_l}[\mathbf{T}_0[w]]$-bimodules which are finite-dimensional as $\overline{\mathbb{Q}_l}$-vector spaces. Define the formal alternating sum of these bimodules to be
\[H_c^*(X)=\sum_{i=0}^\infty (-1)^iH_c^i(X).\]
This sum is finite, since for all $i>2\dim X$, $H_c^i(X)=0$. For $\theta\in\Irr(\mathbf{T}_0[w])$, we can define $R_w^\theta=H_c^*(X)\otimes_{\mathbf{T}_0[w]}\theta$, which is a virtual representation of $\mathbf{G}^F$. We do not cover the construction in detail, instead we quote standard results in giving the argument.
\begin{proposition}[Inner product bound]\label{inner product bound}
Let $R^\theta_w,R^{\theta'}_{w'}$ be two Deligne-Lusztig representations. Then $\langle R^\theta_w,R^{\theta'}_{w'}\rangle$ is an integer and $0\leq \langle R^\theta_w,R^{\theta'}_{w'}\rangle\leq |\mathbf{W}|$.
\end{proposition}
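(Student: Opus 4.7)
The plan is to invoke the Deligne--Lusztig orthogonality formula from \cite{delignelusztig76}. In the geometric setup underlying the construction of $R^\theta_w$, one associates to $w\in\mathbf{W}$ an $F$-stable maximal torus $\mathbf{T}_w$ of $\mathbf{G}$ whose group of $F$-fixed points is canonically identified with $\mathbf{T}_0[w]$, so that $\theta$ transports to a character of $\mathbf{T}_w^F$ and $R^\theta_w$ becomes a virtual character of $\mathbf{G}^F$ attached to $(\mathbf{T}_w,\theta)$. The orthogonality formula reads
\[
\langle R^\theta_w, R^{\theta'}_{w'}\rangle_{\mathbf{G}^F} \;=\; \#\bigl\{\, n \in W(\mathbf{T}_w,\mathbf{T}_{w'})^F : {}^n\theta' = \theta \,\bigr\},
\]
where $W(\mathbf{T}_w,\mathbf{T}_{w'}) := \{g\in\mathbf{G} : g\mathbf{T}_{w'}g^{-1} = \mathbf{T}_w\}/\mathbf{T}_{w'}$ is the transporter modulo the target torus, with $F$ and $\mathbf{G}^F$-conjugation acting in the obvious way. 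My first step is simply to quote this formula (it appears as Theorem~6.8 of \cite{delignelusztig76}). Since the right-hand side is the cardinality of a finite set, integrality and non-negativity follow immediately.

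For the upper bound, I observe that $W(\mathbf{T}_w,\mathbf{T}_{w'})$ is either empty (when the two tori lie in different $\mathbf{G}$-conjugacy classes, which does not happen over $\overline{\mathbb{F}_p}$, but the $F$-fixed set can still be empty) or a torsor under the Weyl group $W(\mathbf{T}_{w'}) := N_\mathbf{G}(\mathbf{T}_{w'})/\mathbf{T}_{w'}$ by right multiplication. Any maximal torus Weyl group of $\mathbf{G}$ is abstractly isomorphic to $\mathbf{W}$, so $|W(\mathbf{T}_w,\mathbf{T}_{w'})| \leq |\mathbf{W}|$. Taking $F$-fixed points only decreases cardinality, and imposing the further constraint ${}^n\theta'=\theta$ decreases it again, so the counted set has at most $|\mathbf{W}|$ elements. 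This yields the stated bound.

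All the substantive work is hidden inside the orthogonality formula itself, whose derivation goes through the Lefschetz trace formula applied to the product $X(w)\times X(w')$ of Deligne--Lusztig varieties and a careful analysis of $F$-fixed points of the diagonal $\mathbf{G}^F$-action. Once that formula is quoted, the proposition reduces to the elementary counting argument above. There is no genuine obstacle here; the only step that would require substantial groundwork is the orthogonality formula, which I treat as a standard citable input.
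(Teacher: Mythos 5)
Your proposal is correct and follows essentially the same route as the paper: both reduce the statement to the Deligne--Lusztig orthogonality (inner product) formula, cited in the paper as \cite[Theorem 7.3.4]{carter93}, from which integrality and non-negativity are immediate and the bound $|\mathbf{W}|$ follows by the counting observation you spell out. The paper leaves that counting implicit as a ``direct consequence,'' so your write-up simply makes the same argument explicit.
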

\begin{proof}
This is a direct consequence of the inner product formula for Deligne-Lusztig representations, see e.g. \cite[Theorem 7.3.4]{carter93}.
\end{proof}
\begin{proposition}\label{uniformity of regular representation}\cite{geck17}
The character of the regular representation of $\mathbf{G}^F$ can be written as
\[\chi_{\mathrm{reg}}=\frac 1{|\mathbf{W}|}\sum_{w\in\mathbf{W}}\sum_{\theta\in\Irr(\mathbf{T}_0[w])}R^\theta_w(1)R^\theta_w.\]
\end{proposition}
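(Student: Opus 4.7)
The plan is to identify the right-hand side with a consequence of the Deligne--Lusztig parameterisation of $F$-stable maximal tori by $\mathbf{W}$, combined with the character formula and Fourier inversion on each finite torus $\mathbf{T}_0[w]$.

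First I would recall that the $\mathbf{G}^F$-conjugacy classes of $F$-stable maximal tori of $\mathbf{G}$ are in bijection with $F$-conjugacy classes in $\mathbf{W}$: to each $w \in \mathbf{W}$ one attaches a twisted torus $\mathbf{T}_w$ with $\mathbf{T}_w^F \cong \mathbf{T}_0[w]$ and relative Weyl group of order $|C_{\mathbf{W},F}(w)|$. As $w$ ranges over the whole group $\mathbf{W}$, each $\mathbf{G}^F$-conjugacy class of tori is visited $|\mathbf{W}|/|C_{\mathbf{W},F}(w)|$ times, which is exactly what the overall factor $1/|\mathbf{W}|$ in the statement compensates for.

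Next, for $x \in \mathbf{G}^F$ with Jordan decomposition $x = x_s x_u$, I would evaluate $R_w^\theta(x)$ via the Deligne--Lusztig character formula, which expresses it as a sum over those $g \in \mathbf{G}^F$ that conjugate $x_s$ into $\mathbf{T}_w$ of terms of the shape $Q_{\mathbf{T}_w^g}(x_u)\,\theta(g^{-1}x_s g)$, up to normalising constants. The degree formula gives $R_w^\theta(1) = \varepsilon_\mathbf{G}\varepsilon_{\mathbf{T}_w}|\mathbf{G}^F|_{p'}/|\mathbf{T}_0[w]|$, independent of $\theta$, so summing $R_w^\theta(1)R_w^\theta(x)$ over $\theta \in \Irr(\mathbf{T}_0[w])$ triggers Fourier inversion on the abelian group $\mathbf{T}_0[w]$:
\[\sum_{\theta \in \Irr(\mathbf{T}_0[w])} \theta(t) = |\mathbf{T}_0[w]|\,\delta_{t,1}.\]
This collapses the sum: only terms with $g^{-1}x_s g = 1$ survive, forcing $x_s = 1$, i.e.\ only unipotent $x$ contribute, and for such $x$ the right-hand side of the proposition reduces to an expression of the form
\[\frac{|\mathbf{G}^F|_{p'}}{|\mathbf{W}|}\sum_{w \in \mathbf{W}} \varepsilon_\mathbf{G}\varepsilon_{\mathbf{T}_w}\, Q_{\mathbf{T}_w}(x).\]

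To conclude, I would invoke the classical orthogonality relation for Green functions, which asserts that this particular uniform combination equals $|\mathbf{G}^F|$ if $x = 1$ and vanishes otherwise on the unipotent set, matching the values of $\chi_{\mathrm{reg}}$ there and hence everywhere. The main obstacle will be the bookkeeping: correctly tracking the Deligne--Lusztig signs $\varepsilon_\mathbf{G}, \varepsilon_{\mathbf{T}_w}$ and the factors of $|\mathbf{T}_0[w]|$, and verifying that the overcounting produced by summing over $w \in \mathbf{W}$ rather than over $F$-conjugacy classes exactly cancels with the normalisations appearing in the character and degree formulas. The substantive ingredients (the character formula, the degree formula, and the Green function orthogonality relation) are all standard tools in Deligne--Lusztig theory.
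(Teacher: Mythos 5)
The paper offers no proof of this proposition at all: it is quoted from Geck's survey, and is ultimately a classical result of Deligne and Lusztig (it appears in Carter's book \cite{carter93} in \S 7.5, alongside the degree formula and Corollary 7.5.8 that the paper does cite). So your sketch cannot coincide with ``the paper's argument''; judged on its own, it is the standard proof and its outline is correct: by the degree formula $R^\theta_w(1)$ is independent of $\theta$, so summing $R^\theta_w(1)R^\theta_w(x)$ over $\theta\in\Irr(\mathbf{T}_0[w])$ and using the character formula together with column orthogonality on the abelian group $\mathbf{T}_0[w]$ kills every $x$ with $x_s\neq 1$ and leaves $\varepsilon_{\mathbf{G}}\varepsilon_{\mathbf{T}_w}\,|\mathbf{G}^F|_{p'}\,Q_{\mathbf{T}_w}(u)$ on unipotent $u$ (the factor $1/|C^\circ(x_s)^F|=1/|\mathbf{G}^F|$ in the character formula cancels against the sum over $g\in\mathbf{G}^F$, and the $|\mathbf{T}_0[w]|$ from orthogonality cancels the denominator of the degree formula). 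Two points need to be stated more carefully than ``Green function orthogonality''. The identity you actually need is
\[\frac{1}{|\mathbf{W}|}\sum_{w\in\mathbf{W}}\varepsilon_{\mathbf{G}}\varepsilon_{\mathbf{T}_w}\,Q_{\mathbf{T}_w}(u)\;=\;\begin{cases}|\mathbf{G}^F|_p,& u=1,\\ 0,& u\neq 1 \text{ unipotent,}\end{cases}\]
which is not literally the orthogonality relation for Green functions: it is the restriction to unipotent elements of the uniform expansion of the Steinberg character, which vanishes on non-semisimple elements (alternatively, it can be deduced from Green-function orthogonality by a norm computation, using that the values $Q_{\mathbf{T}_w}(u)$ are integers, so vanishing of the off-identity terms follows once the two sides have equal norm). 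Second, the value at $u=1$, and hence the match with $\chi_{\mathrm{reg}}(1)=|\mathbf{G}^F|$, requires Steinberg's count of $F$-stable maximal tori, equivalently $\frac{1}{|\mathbf{W}|}\sum_{w}|\mathbf{T}_0[w]|^{-1}=|\mathbf{G}^F|_p^{2}/|\mathbf{G}^F|$; this is where the overcounting you describe in your first paragraph is genuinely cashed out, so that paragraph is heuristic rather than a separate step of the proof. With those two ingredients made explicit, your argument is complete and agrees with the classical one that Geck's statement rests on.
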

\begin{corollary}\cite[Corollary 7.5.8]{carter93}
Every $\rho\in\Irr(\mathbf{G}^F)$ occurs in some Deligne-Lusztig representation, $R^\theta_w$. 
\end{corollary}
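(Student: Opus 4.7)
The plan is to deduce the corollary as an essentially immediate consequence of Proposition \ref{uniformity of regular representation}. The regular representation of the finite group $\mathbf{G}^F$ contains every irreducible $\rho\in\Irr(\mathbf{G}^F)$ with multiplicity $\dim\rho$, so in particular $\langle \chi_{\mathrm{reg}},\rho\rangle=\dim\rho\neq 0$.

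Substituting the expression for $\chi_{\mathrm{reg}}$ given by Proposition \ref{uniformity of regular representation} and using bilinearity of the inner product on virtual characters, one obtains
\[
\dim\rho=\frac{1}{|\mathbf{W}|}\sum_{w\in\mathbf{W}}\sum_{\theta\in\Irr(\mathbf{T}_0[w])} R_w^\theta(1)\,\langle R_w^\theta,\rho\rangle.
\]
Since the left-hand side is nonzero, at least one summand on the right must be nonzero, so there exist $w\in\mathbf{W}$ and $\theta\in\Irr(\mathbf{T}_0[w])$ with $\langle R_w^\theta,\rho\rangle\neq 0$. This is precisely the statement that $\rho$ occurs as a constituent of $R_w^\theta$.

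There is no real obstacle here beyond correctly invoking Proposition \ref{uniformity of regular representation}; the only mild subtlety worth flagging is that each $R_w^\theta$ is a priori only a virtual representation, so ``$\rho$ occurs in $R_w^\theta$'' should be read as $\rho$ appearing with nonzero (possibly negative) multiplicity in the decomposition of $R_w^\theta$ into irreducibles, which is equivalent to $\langle R_w^\theta,\rho\rangle\neq 0$ by definition.
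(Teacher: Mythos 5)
Your proposal is correct and follows exactly the paper's argument: pair $\rho$ with the regular character, substitute the uniform decomposition of $\chi_{\mathrm{reg}}$ from Proposition \ref{uniformity of regular representation}, and note that some summand, hence some $\langle R^\theta_w,\rho\rangle$, must be nonzero. The remark about virtual representations is a sensible clarification but changes nothing of substance.
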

\begin{proof}
Write
\begin{align*}
\dim\rho&=\langle\rho,\chi_\mathrm{reg}\rangle\\
&=\frac 1{|\mathbf{W}|}\sum_{w\in\mathbf{W}}\sum_{\theta\in\Irr(\mathbf{T}_0[w])}R^\theta_w(1)\langle R^\theta_w,\rho\rangle.
\end{align*}
Then one of the terms in the sum must be non-zero, in particular $\langle R^\theta_w,\rho\rangle\neq 0$ for some $R^\theta_w$.
\end{proof}
\begin{definition}[Norm map]
Let $\mathbf{T}$ be an $F$-stable maximal torus in $\mathbf{G}$. The \textbf{$d$th norm map} is
\[N_{F^d/F}:\mathbf{T}^{F^d}\to\mathbf{T}^F;\quad t\mapsto tF(t)\dots F^{d-1}(t).\]
\end{definition}
\begin{definition}\label{geometrically conjugate}\cite[4.1.3]{carter93}
Let $\mathbf{T},\mathbf{T}'$ be $F$-stable maximal tori in $\mathbf{G}$ with associated norm maps $N_{F^d/F},N_{F^d/F}'$, and $\theta\in\Irr(\mathbf{T}^F),\theta'\in\Irr({\mathbf{T}'}^F)$. Then $(\mathbf{T},\theta)$ and $(\mathbf{T}',\theta')$ are \textbf{geometrically conjugate} if there exists $d\geq 1$ and $g\in\mathbf{G}^{F^d}$ such that ${\mathbf{T}'=g\mathbf{T}g^{-1}}$ and $g$ maps $\theta\circ N_{F^d/F}$ to $\theta'\circ N_{F^d/F}'$, i.e. for all $t\in(\mathbf{T}')^{F^d}$,
\[\theta'\circ N_{F^d/F}'(t)=\theta\circ N_{F^d/F}(g^{-1}tg).\]
\end{definition}
We say that two virtual representations, $R_1,R_2$ of a group $G$ are \textbf{disjoint} if for all $\rho\in\Irr(G)$, either $\langle R_1,\rho\rangle=0$ or $\langle R_2,\rho\rangle=0$. The following important result relates geometric conjugacy and disjointness of Deligne-Lusztig representations.
\begin{proposition}[Exclusion theorem]\label{exclusion}\cite[Theorem 7.3.8]{carter93}
Let $\mathbf{T}_0$ be a maximally split torus and $\mathbf{T},\mathbf{T}'$ be obtained by conjugating $\mathbf{T}_0$ with $g,g'\in\mathbf{G}$ respectively, so $\mathbf{T}^F=\mathbf{T}_0[w]$, where $w$ is the image of $g^{-1}F(g)$ in $\mathbf{W}$ and $(\mathbf{T}')^F=\mathbf{T}_0[w']$, where $w'$ is the image of $(g')^{-1}F(g')$ in $\mathbf{W}$. Let $\theta\in\Irr(\mathbf{T}_0[w]),\theta'\in\Irr(\mathbf{T}_0[w'])$. Suppose that $(\mathbf{T},\theta)$ and $(\mathbf{T}',\theta')$ are not geometrically conjugate. Then $R^\theta_w$ and $R^{\theta'}_{w'}$ are disjoint.
\end{proposition}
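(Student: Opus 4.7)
The plan is to prove the contrapositive: if some $\rho \in \Irr(\mathbf{G}^F)$ is a common constituent of $R_w^\theta$ and $R_{w'}^{\theta'}$, then $(\mathbf{T}_0[w],\theta)$ and $(\mathbf{T}_0[w'],\theta')$ must be geometrically conjugate. The strategy is to attach to each irreducible character of $\mathbf{G}^F$ a well-defined geometric conjugacy class of pairs, and then to argue that $\langle R_w^\theta, \rho\rangle \neq 0$ forces $(\mathbf{T}_0[w],\theta)$ to lie in that class.

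First I would invoke the explicit Deligne--Lusztig inner product formula (the refinement of Proposition \ref{inner product bound} provided by \cite[Theorem 7.3.4]{carter93}), which expresses $\langle R_w^\theta, R_{w'}^{\theta'}\rangle_{\mathbf{G}^F}$ as a count of elements of $\mathbf{G}^F$ conjugating $(\mathbf{T}_0[w],\theta)$ to $(\mathbf{T}_0[w'],\theta')$, normalised by a torus order. In particular, this inner product vanishes whenever the two pairs are not $\mathbf{G}^F$-rationally conjugate, and a fortiori whenever they are not geometrically conjugate. Grouping the summands of the regular-character formula in Proposition \ref{uniformity of regular representation} by geometric conjugacy class $\kappa$, I would form the decomposition
\[\chi_{\mathrm{reg}} = \sum_\kappa \pi_\kappa, \qquad \pi_\kappa = \frac{1}{|\mathbf{W}|}\sum_{(w,\theta) \in \kappa} R_w^\theta(1)\,R_w^\theta,\]
in which $\langle \pi_\kappa, \pi_{\kappa'}\rangle_{\mathbf{G}^F} = 0$ whenever $\kappa \neq \kappa'$.

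To upgrade this orthogonality of virtual characters to genuine disjointness of irreducible constituents, I would pass to an extension $\mathbf{G}^{F^d}$ with $d$ large enough that any two geometrically conjugate pairs become $\mathbf{G}^{F^d}$-rationally conjugate. A Shintani-type descent, or equivalently a Mackey decomposition using lifted tori, then identifies each irreducible constituent of $R_w^\theta$ with an irreducible of $\mathbf{G}^{F^d}$ whose associated rational conjugacy class of Deligne--Lusztig data sits over $\kappa$. This yields a well-defined invariant $\rho \mapsto \kappa(\rho)$, and the theorem follows: a common constituent of $R_w^\theta$ and $R_{w'}^{\theta'}$ forces $(\mathbf{T}_0[w],\theta)$ and $(\mathbf{T}_0[w'],\theta')$ to share the same $\kappa$.

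The main obstacle is precisely this final upgrade: virtual characters that are orthogonal as class functions can, in principle, share irreducible constituents with cancelling multiplicities. The descent step is what rules this out, by pinning each constituent of $R_w^\theta$ to a single class $\kappa$; combined with the integrality of all inner products guaranteed by Proposition \ref{inner product bound}, it forces $\langle R_w^\theta, \rho\rangle$ to vanish whenever $(\mathbf{T}_0[w],\theta) \notin \kappa(\rho)$.
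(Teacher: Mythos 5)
The paper does not actually prove this proposition: it is quoted from \cite[Theorem 7.3.8]{carter93} (originally Deligne--Lusztig), so your attempt should be measured against that standard argument. Your steps built on the inner product formula are fine as far as they go: the formula shows $\langle R^\theta_w,R^{\theta'}_{w'}\rangle=0$ when the pairs are not rationally conjugate, hence your grouped pieces $\pi_\kappa$ of $\chi_{\mathrm{reg}}$ are pairwise orthogonal. But, as you yourself note, orthogonality of virtual characters cannot rule out common constituents with cancelling signs, so the entire content of the theorem sits in your final step, and that step does not work as described. There is no elementary ``Shintani-type descent, or equivalently a Mackey decomposition using lifted tori'' that attaches to each irreducible constituent of $R^\theta_w$ an irreducible of $\mathbf{G}^{F^d}$ together with a well-defined class of Deligne--Lusztig data: $\mathbf{G}^F\leq\mathbf{G}^{F^d}$, but restriction and induction do not carry Deligne--Lusztig characters to Deligne--Lusztig characters in any usable way; Shintani descent runs in the opposite direction and its compatibility with $R^\theta_w$ is itself a later, much harder theorem. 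Worse, the assertion that an irreducible of $\mathbf{G}^{F^d}$ has a well-defined ``associated rational (or geometric) conjugacy class of Deligne--Lusztig data'' is exactly the disjointness statement you are trying to prove, now at level $F^d$ --- so the invariant $\rho\mapsto\kappa(\rho)$ is assumed rather than constructed, and the argument is circular at its crucial point.

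The actual proof avoids virtual characters altogether at this step and works with genuine modules: every irreducible constituent of $R^\theta_w$ occurs in some actual cohomology group $H^i_c(X_{\mathbf{T}})_\theta$ of the Deligne--Lusztig variety, and one bounds $\Hom_{\mathbf{G}^F}\bigl(H^i_c(X_{\mathbf{T}})_\theta,\,H^j_c(X_{\mathbf{T}'})_{\theta'}\bigr)$ by the cohomology of the quotient of $X_{\mathbf{T}}\times X_{\mathbf{T}'}$ by the diagonal $\mathbf{G}^F$-action. The point is that for every $d\geq 1$ the groups $\mathbf{T}^{F^d}\times(\mathbf{T}')^{F^d}$ act on this cohomology through the norm maps $N_{F^d/F}$, and a fixed-point/eigenvalue argument shows that the $(\theta,\theta')$-isotypic part vanishes unless some element conjugates $(\mathbf{T},\theta\circ N_{F^d/F})$ to $(\mathbf{T}',\theta'\circ N'_{F^d/F})$ for some $d$, i.e.\ unless the pairs are geometrically conjugate. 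That geometric computation is what rules out cancellation, and it is the ingredient missing from your proposal; if you want to write a complete proof, this is the argument of \cite[Theorem 6.2]{delignelusztig76} reproduced in \cite[\S 7.3]{carter93}.
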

\begin{proposition}
Let $\rho$ be an irreducible representation of $\mathbf{G}^F$, occurring in some Deligne-Lusztig representation, $R^\theta_w$. Then
\[|\langle R^\theta_w,\rho\rangle|\leq|\mathbf{W}|^{\frac 12}.\]
\end{proposition}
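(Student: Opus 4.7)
The plan is to deduce this bound directly from Proposition \ref{inner product bound} applied to the inner product of $R^\theta_w$ with itself, together with the integrality of the multiplicities of irreducible constituents in a Deligne--Lusztig virtual representation.

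Concretely, I would enumerate the irreducible characters of $\mathbf{G}^F$ as $\rho_1, \rho_2, \dots, \rho_N$ and write
\[
R^\theta_w = \sum_{i=1}^N c_i \rho_i, \qquad c_i = \langle R^\theta_w, \rho_i \rangle \in \mathbb{Z},
\]
where integrality of the $c_i$ is a general fact about virtual characters. By Proposition \ref{inner product bound} applied with $(\mathbf{T}', w', \theta') = (\mathbf{T}, w, \theta)$, we obtain
\[
\sum_{i=1}^N c_i^2 = \langle R^\theta_w, R^\theta_w \rangle \leq |\mathbf{W}|.
\]
Since $\rho$ appears among the $\rho_i$, say $\rho = \rho_{i_0}$ with $c_{i_0} = \langle R^\theta_w, \rho \rangle$, we have $c_{i_0}^2 \leq \sum_i c_i^2 \leq |\mathbf{W}|$, which gives $|\langle R^\theta_w, \rho \rangle| \leq |\mathbf{W}|^{1/2}$.

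There is no real obstacle here beyond invoking the already-stated inner product bound; the only conceptual content is the observation that integrality of the multiplicities allows a bound on the $\ell^2$-norm of the coefficient vector to be converted into a bound on each individual coefficient. The hypothesis that $\rho$ occurs in $R^\theta_w$ is used only to ensure that $\rho$ appears in the decomposition with some nonzero integer coefficient, so that the bound is informative; it is not strictly needed for the inequality itself, since the bound $|\langle R^\theta_w, \rho \rangle| \leq |\mathbf{W}|^{1/2}$ holds trivially when $\rho$ does not occur.
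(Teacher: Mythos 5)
Your proposal is correct and follows exactly the paper's argument: decompose $R^\theta_w$ into irreducibles with integer coefficients, bound $\langle R^\theta_w,R^\theta_w\rangle$ by $|\mathbf{W}|$ via the inner product bound, and conclude that each squared coefficient, in particular $\langle R^\theta_w,\rho\rangle^2$, is at most $|\mathbf{W}|$. No differences worth noting.
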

\begin{proof}
Write $R^\theta_w$ as a $\mathbb{Z}$-linear combination of irreducible representations:
\[R^\theta_w=e_1\rho_1+\dots+e_k\rho_k,\]
where $\rho_1=\rho$. Then
\[e_1^2+\dots+e_k^2=\langle R^\theta_w,R^\theta_w\rangle.\]
By Proposition \ref{inner product bound}, the inner product of two Deligne-Lusztig representations is bounded above by $|\mathbf{W}|$. Therefore, we have $\langle R^\theta_w,\rho\rangle^2=e_1^2\leq |\mathbf{W}|$, and the result follows.
\end{proof}
\begin{proposition}\label{geometric conjugacy class bound}\cite[Lemma 2.3.5]{geckmalle20}
Fix an $F$-stable maximal torus $\mathbf{T}$ of $\mathbf{G}$ and $\theta\in\Irr(\mathbf{T}^F)$. There are at most $|\mathbf{W}|$ choices of $\theta'\in\Irr(\mathbf{T}^F)$ for which $(\mathbf{T},\theta')$ is geometrically conjugate to $(\mathbf{T},\theta)$.
\end{proposition}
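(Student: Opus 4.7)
The plan is to reduce to counting orbits of the relative Weyl group $W_\mathbf{T} = N_\mathbf{G}(\mathbf{T})/\mathbf{T}$ on characters of $\mathbf{T}^F$, and then use that all maximal tori of the connected reductive group $\mathbf{G}$ are conjugate, so that $|W_\mathbf{T}|=|\mathbf{W}|$.

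First, if $(\mathbf{T},\theta')$ is geometrically conjugate to $(\mathbf{T},\theta)$ via some $g\in\mathbf{G}^{F^d}$ as in Definition \ref{geometrically conjugate}, the condition $g\mathbf{T}g^{-1}=\mathbf{T}$ forces $g\in N_\mathbf{G}(\mathbf{T})^{F^d}$, and because $\mathbf{T}$ is abelian the conjugation action on $\Irr(\mathbf{T}^{F^d})$ factors through the image $\bar g\in W_\mathbf{T}^{F^d}$. I would then fix $d$ divisible by the order of $F$ on the finite group $W_\mathbf{T}$, so that $W_\mathbf{T}^{F^d}=W_\mathbf{T}$; by Lang--Steinberg applied to $\mathbf{T}$ (the identity component of $N_\mathbf{G}(\mathbf{T})$) with Frobenius $F^d$, every $w\in W_\mathbf{T}$ admits an $F^d$-fixed lift in $N_\mathbf{G}(\mathbf{T})$. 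Since the norm map $N_{F^d/F}:\mathbf{T}^{F^d}\to\mathbf{T}^F$ is surjective (a standard consequence of Lang's theorem for tori), the pullback $\theta'\mapsto\theta'\circ N_{F^d/F}$ injects $\Irr(\mathbf{T}^F)$ into $\Irr(\mathbf{T}^{F^d})$. Consequently, the set of $\theta'$ that are geometrically conjugate to $\theta$ \emph{at this fixed level $d$} embeds into the $W_\mathbf{T}$-orbit of $\theta\circ N_{F^d/F}$, so has size at most $|W_\mathbf{T}|$.

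The main obstacle, and what most of the argument is about, is showing that enlarging $d$ to a multiple $d'=kd$ produces no further $\theta'$. Suppose $g'\in\mathbf{G}^{F^{d'}}$ witnesses geometric conjugacy at level $d'$, so $\theta'\circ N_{F^{d'}/F}={}^{g'}(\theta\circ N_{F^{d'}/F})$. I would factor $N_{F^{d'}/F}=N_{F^d/F}\circ N_{F^{d'}/F^d}$ and observe that, because $F^d$ fixes $\bar g'\in W_\mathbf{T}^{F^{d'}}=W_\mathbf{T}$, the intermediate norm $N_{F^{d'}/F^d}:\mathbf{T}^{F^{d'}}\to\mathbf{T}^{F^d}$ is $\bar g'$-equivariant; it is also surjective (again Lang for tori). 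Cancelling $N_{F^{d'}/F^d}$ descends the identity to $\theta'\circ N_{F^d/F}={}^{\bar g'}(\theta\circ N_{F^d/F})$, so the conjugacy was already realized at level $d$.

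Combining the two parts, the total number of $\theta'\in\Irr(\mathbf{T}^F)$ geometrically conjugate to $\theta$ is bounded by $|W_\mathbf{T}|$; since all maximal tori of $\mathbf{G}$ are conjugate, $|W_\mathbf{T}|=|\mathbf{W}|$, giving the claimed bound.
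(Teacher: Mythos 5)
Your argument is correct, and its core mechanism is the same as the paper's: any witness $g$ of a geometric conjugacy between $(\mathbf{T},\theta)$ and $(\mathbf{T},\theta')$ lies in $N_\mathbf{G}(\mathbf{T})^{F^d}$, its effect on $\theta\circ N_{F^d/F}$ depends only on its image in the relative Weyl group (since $\mathbf{T}$ is abelian), and injectivity of the norm pullback transfers the orbit bound from $\Irr(\mathbf{T}^{F^d})$ down to $\Irr(\mathbf{T}^F)$. Where you differ is in how the level $d$ is handled. The paper works with whatever $d$ each conjugacy happens to be witnessed at and bounds the count by $|N_\mathbf{G}(\mathbf{T})^{F^d}/\mathbf{T}^{F^d}|=|\mathbf{W}^{F^d}|\leq|\mathbf{W}|$, leaving implicit both the surjectivity of $N_{F^d/F}$ (needed so that $\theta'$ is determined by $\theta'\circ N_{F^d/F}$) and the question of whether conjugacies witnessed at different levels could, in total, produce more than $|\mathbf{W}|$ characters. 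You instead fix one level $d$ with $W_\mathbf{T}^{F^d}=W_\mathbf{T}$, make the norm surjectivity explicit, and prove a descent statement (via the $\bar g'$-equivariance and surjectivity of $N_{F^{d'}/F^d}$) showing that levels $d'=kd$ contribute nothing new; this buys a bound that is genuinely uniform in the level, at the cost of the extra equivariance computation, and in that respect your write-up is more complete than the paper's. One small point to add: an arbitrary geometric conjugacy is a priori witnessed at some level $e$ that need not be a multiple of your chosen $d$, so you should first pass from $e$ to $\mathrm{lcm}(d,e)$; this is immediate, since an $F^e$-fixed witness commutes with the intermediate norm $N_{F^{d'}/F^e}$ term by term, but it deserves a sentence so that your descent step applies to all witnesses.
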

\begin{proof}
Let $(\mathbf{T},\theta),(\mathbf{T},\theta')$ be geometrically conjugate. Choose $d\geq 0$ and $g\in\mathbf{G}^{F^d}$ such that ${^g}\mathbf{T}=\mathbf{T}$ and $g$ maps $\theta\circ N_{F^d/F}$ to $\theta'\circ N_{F^d/F}$ as in Definition \ref{geometrically conjugate}. Note that $g\in N_{\mathbf{G}}(\mathbf{T})^{F^d}$, so we can write $g=ms$, where $m$ is in some fixed set of left coset representatives for $\mathbf{T}^{F^d}$ in $N_{\mathbf{G}}(\mathbf{T})^{F^d}$ and $s\in \mathbf{T}^{F^d}$. We see that ${\theta\circ N_{F^d/F}(g^{-1}tg)=\theta\circ N_{F^d/F}(m^{-1}tm)}$, therefore $m$ also transforms $\theta\circ N_{F^d/F}$ to $\theta'\circ N_{F^d/F}$. Thus, using Lemma \ref{frobenius factoring} we get at most
\[|N_{\mathbf{G}}(\mathbf{T})^{F^d}/\mathbf{T}^{F^d}|=|\mathbf{W}^{F^d}|\]
many $\theta'$ such that $(\mathbf{T},\theta)$ is geometrically conjugate to $(\mathbf{T},\theta')$.
\end{proof}
Given $\mathbf{G}$, a maximal torus $\mathbf{T}$ of $\mathbf{G}$ and a Frobenius map $F$, consider the induced action of $F$ on the set of roots, $X=\Hom(\mathbf{T},\mathbb{G}_m)$. Following \cite[\S 1.18]{carter93}, take $\delta\in\mathbb{N}$ to be minimal such that $F^\delta=kI$ for some integer $k\geq 2$. Now define $q$ to be such that $k=q^\delta$.
\begin{proposition}[Deligne-Lusztig representation dimensions are polynomial]
Let $\mathbf{G}$ be reductive with Frobenius map $F$ and $F$-stable maximally split torus $\mathbf{T}_0$. For every $w\in\mathbf{W}$, there exists a polynomial $f_w$ such that for every $d\geq 1$ and all $\theta\in\Irr(\mathbf{T}_0[w,F^d])$, $R^\theta_w(1)=f_w(q^d)$, where $q$ is the parameter associated to $F$ as before.
\end{proposition}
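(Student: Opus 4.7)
My plan is to reduce the question to an identity about orders of finite groups. The key first ingredient is the classical fact that $R^\theta_w(1)$ depends only on $w$ (and on $F^d$), not on $\theta$; one has the closed formula
\[
R^\theta_w(1)=\epsilon_{\mathbf{G}}\,\epsilon_{\mathbf{T}_0[w]}\,\frac{|\mathbf{G}^{F^d}|_{p'}}{|\mathbf{T}_0[w,F^d]|},
\]
with $\epsilon_{\mathbf{G}},\epsilon_{\mathbf{T}_0[w]}\in\{\pm 1\}$ depending only on $\mathbf{G}$ and on the $F$-conjugacy class of $w$ in $\mathbf{W}$; see \cite[\S 7.5]{carter93}. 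Since $\theta$ has disappeared from the right-hand side, it suffices to exhibit a polynomial $f_w\in\mathbb{Q}[x]$ satisfying $f_w(q^d)=\pm|\mathbf{G}^{F^d}|_{p'}/|\mathbf{T}_0[w,F^d]|$ for every $d\geq 1$.

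Both orders on the right can be computed from the action of $F$ on $V=X^{*}(\mathbf{T}_0)\otimes\mathbb{Q}$. Writing this action as $q\sigma$ with $\sigma$ a finite-order automorphism of $V$, Steinberg's order formula gives
\[
|\mathbf{G}^{F^d}|_{p'}=\prod_{i=1}^{r}\bigl(q^{de_i}-\lambda_i^{d}\bigr),
\]
where $e_1,\dots,e_r$ are the degrees of the fundamental $\mathbf{W}$-invariants and $\lambda_1,\dots,\lambda_r$ are the eigenvalues of $\sigma$ on them. On the torus side, a direct Lang-type computation yields
\[
|\mathbf{T}_0[w,F^d]|=\bigl|\det\nolimits_{V}\bigl(q^d(w\sigma)^d-\mathrm{id}\bigr)\bigr|.
\]
Both expressions are polynomials in $q^d$, but with coefficients which \emph{a priori} depend on $d$ through the $d$-th powers of various roots of unity.

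The heart of the argument is that the quotient is nonetheless given by a single polynomial in $q^d$ with $d$-independent coefficients. I would establish this by appealing to the Chevalley--Shephard--Todd description of the $\mathbf{W}$-invariants, together with Springer's theory of regular elements of complex reflection groups, to match the multiset $\{\lambda_i^{d}\}$ with the multiset of eigenvalues of $(w\sigma)^d$ so that the $d$-dependent cyclotomic factors cancel pairwise in the ratio. This is essentially the ``generic degree'' construction used by Geck: see \cite{geck17} and \cite[\S 2.3]{geckmalle20}, where each $w\in\mathbf{W}$ is assigned a polynomial in one variable $x$ whose value at $x=q^d$ equals the absolute value of $R^\theta_w(1)$. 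The main obstacle I anticipate is precisely this bookkeeping: one must show, uniformly in $d$, that every root-of-unity factor appearing in the numerator is matched by an identical one in the denominator, leaving only fixed cyclotomic polynomials $(x^k-1)$ evaluated at $x=q^d$ with $k$ independent of $d$. Once that cancellation is verified, the sign $\epsilon_{\mathbf{G}}\epsilon_{\mathbf{T}_0[w]}$ contributes a $d$-independent factor of $\pm 1$, and setting $f_w(x)$ equal to the resulting polynomial in $x$ concludes the proof.
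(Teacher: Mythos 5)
Your opening step is exactly the paper's: both arguments reduce to the formula $R^\theta_w(1)=\pm\,|\mathbf{G}^{F^d}|_{p'}/|\mathbf{T}_0[w,F^d]|$ of \cite[Theorem 7.5.1]{carter93}, after which $\theta$ plays no role. The divergence is in how polynomiality of that quotient is obtained. The paper finishes by quoting Carter's order formulas (\cite[\S 2.9, Proposition 3.3.8]{carter93}), which exhibit $|\mathbf{G}^{F}|$ and $|\mathbf{T}_0[w]|$ explicitly enough that the $p'$-index is visibly a polynomial in the parameter; you instead propose to derive polynomiality from Steinberg's eigenvalue formula together with a cancellation of root-of-unity factors which you explicitly defer (``the main obstacle''). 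That deferred cancellation is the entire content of the proposition, so the proposal has a genuine gap, and the mechanism you sketch for closing it does not work. First, $|\mathbf{T}_0[w,F^d]|=|\det_V(q^d\,w\sigma^d-\mathrm{id})|$: the relevant automorphism is $w\sigma^d$ (the twist of $F^d$ by $w$), not $(w\sigma)^d$, which computes the points of a different torus. Second, the multiset $\{\lambda_i^d\}$ is not matched by the eigenvalues of $w\sigma^d$: already in the untwisted case every $\lambda_i=1$ while $w$ generally has nontrivial eigenvalues, yet the quotient is a polynomial. So the cancellation is not a pairwise matching of identical cyclotomic factors; the true mechanism is a divisibility of order polynomials for each fixed Frobenius, which is precisely what the paper extracts from Carter rather than re-proving.

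More seriously, the uniformity in $d$ that you single out as remaining bookkeeping cannot be achieved in the form you aim for (a single $f_w$ with $d$-independent coefficients) once $F$ is twisted, because the data entering both orders depend on $\sigma^d$, i.e.\ on $d$ modulo $\delta$. Concretely, take $\mathbf{G}$ of type $A_2$ with twisted Frobenius, so $\mathbf{G}^F=\mathrm{GU}_3(q)$, and $w=1$: for odd $d$ one gets $R^\theta_1(1)=q^{3d}+1$ (Harish-Chandra induction from the Borel of the unitary group), while for even $d$ the group is $\mathrm{GL}_3(q^d)$, the torus $\mathbf{T}_0[1,F^d]$ is split, and $R^\theta_1(1)=(q^d+1)(q^{2d}+q^d+1)$. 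Hence no bookkeeping of cyclotomic factors can produce one polynomial valid for all $d$; the correct outcome of your computation is one polynomial for each residue of $d$ modulo $\delta$, which is still a finite set for each $w$ and suffices for Theorem \ref{weak polynomial for lie type} (and is in effect what the paper's citation-based argument, applied to each Frobenius $F^d$ separately, delivers). To repair your route, replace the eigenvalue-matching claim by the divisibility of the torus order polynomial in the generic order of $\mathbf{G}$ for each fixed twist, and formulate the conclusion per residue class of $d$ modulo $\delta$.
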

\begin{proof}
By \cite[\S 2.9]{carter93}, we can write
\[|\mathbf{G}^F|=|(\mathbf{Z}^\circ)^F|q^N\prod_J(q^{|J|}-1)\sum_{w\in\mathbf{W}^F}q^{l(w)},\]
where the product runs over orbits $J$ of simple roots of $\mathbf{G}$ and $l(w)$ is the minimal length of an expression of $w$ as a product of the generators $s_i$ of $\mathbf{W}$, and $\mathbf{Z}^\circ$ is the connected component of the centre of $\mathbf{G}$.
\par By \cite[Proposition 3.3.8]{carter93},
\[|\mathbf{T}_0[w]|=|(\mathbf{Z}^\circ)^F|\chi(q)\]
where $\chi$ is a polynomial depending only on $\mathbf{G}$ and $F$. Since $p$ does not divide $|\mathbf{T}_0[w]|$ \cite[p.83]{carter93}, we see that \[[\mathbf{G}^F:\mathbf{T}_0[w]]_{p'}=\frac{\prod_J(q^{|J|}-1)\sum_{w\in\mathbf{W}^F}q^{l(w)}}{\chi(q)},\]
which is a polynomial in $q$. Now by \cite[Theorem 7.5.1]{carter93}, $R^\theta_w(1)=\pm[\mathbf{G}^F:\mathbf{T}_0[w]]_{p'}$, where the sign depends only on $\mathbf{G}$ and $w$.
\end{proof}
\par Suppose that $\rho\in\Irr(\mathbf{G}^F)$. Then writing the regular representation of $\mathbf{G}^F$ in terms of Deligne-Lusztig representations by Proposition \ref{uniformity of regular representation}, we obtain
\begin{align*}
\deg\rho&=\langle\chi_\mathrm{reg},\rho\rangle\\
&=\frac 1{|\mathbf{W}|}\sum_{w\in\mathbf{W}}\sum_{\theta\in\Irr(\mathbf{T}_0[w])}R^\theta_w(1)\langle R^\theta_w,\rho\rangle\\
&=\frac 1{|\mathbf{W}|}\sum_{w\in\mathbf{W}}\sum_{\theta\in\Irr(\mathbf{T}_0[w])}\langle R^\theta_w,\rho\rangle f_w(q).
\end{align*}
By Proposition \ref{geometric conjugacy class bound}, the inner sum has at most $|\mathbf{W}|$ non-zero terms, and for each $\theta$, we have ${|\langle R^\theta_w,\rho\rangle|\leq |\mathbf{W}|^{\frac 12}}$. Therefore, the finite set of polynomials
\[R=\left\{\frac 1{|\mathbf{W}|}\sum_{w\in\mathbf{W}}a_wf_w\middle|a_w\in\mathbb{Z},|a_w|\leq|\mathbf{W}|^{\frac 32}\right\}\]
will give all the dimensions of the irreducible representations of $\mathbf{G}^F$. This completes the proof of Theorem \ref{weak polynomial for lie type}.

\section{Proof of the Main Theorem}\label{Proof of the Main Theorem}
Tao \cite{tao} defined the notion of complexity of a variety. We will use a notion of Hadas \cite[Definition 3.1.1]{hadas24} which is valid for varieties over fields which may not be algebraically closed:
\begin{definition}
Let $V\subseteq\mathbb{A}^n$ be an affine variety over an algebraically closed field. The \textbf{complexity} of $V$ is the smallest integer $M\geq n$ such that the vanishing ideal of $V$ is generated by at most $M$ polynomials, all of degree at most $M$. If $\mathbb{F}$ is any field and $V\subseteq\mathbb{A}^n(\mathbb{F})$ is a variety with an $\mathbb{F}$-embedding $\imath:\mathbb{A}^n(\mathbb{F})\to\mathbb{A}^n(\overline{\mathbb{F}})$, define the complexity of $V$ to be the complexity of the image, $\imath(V)\subseteq\mathbb{A}^n(\overline{\mathbb{F}})$.
\end{definition}
Following \cite{hadas24}, let $\mathbb{F}$ be a finite field and define $\mathcal{A}(n',M_{\mathrm{dim}},M_{\mathrm{cmp}},\mathbb{F})$ to be the set of all algebraic subgroups $G$ of $\mathrm{GL}_{n'}(\overline{\mathbb{F}})$ with dimension at most $M_{\mathrm{dim}}$ and complexity at most $M_{\mathrm{cmp}}$ with an $\mathbb{F}$-rational structure. We shall abuse notation and write $G(\mathbb{F})$ in place of $G_0(\mathbb{F})$, where $G=G_0\times_{\mathbb{F}}\overline{\mathbb{F}}$ and $G_0$ is the algebraic group over $\mathbb{F}$ determining the $\mathbb{F}$-rational structure of~$G$.
\par We now list some results necessary for the proof.
\begin{lemma}\label{H^2 for reductive groups}\cite[Lemma 4.2.1, Corollary 4.2.6]{hadas24}
For every $m\in\mathbb{N}$, there exists ${C_{H^2}(m)>0}$ such that for every prime $p>C_{H^2}(m)$, every finite field $\mathbb{F}$ of characteristic $p$ and every connected reductive $G$ defined over $\mathbb{F}$ of dimension at most~$m$, ${H^2(G(\mathbb{F}),\mu_{p^\infty})=1}$.
Write $U=R_u(G)$ and let $p>C_{H^2}(1+\dim G)$. Then every $\rho\in\Irr(G(\mathbb{F}))$ whose restriction to $U(\mathbb{F})$ is isotypic is a tensor product of a representation of $(G/U)(\mathbb{F})$ and a representation of $U(\mathbb{F})$.
\end{lemma}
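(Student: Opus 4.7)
The lemma comprises a cohomological vanishing and its Clifford-theoretic consequence; my plan is to prove the vanishing by reducing to the Schur multiplier of a simply connected finite group of Lie type, then combine it with the extension criterion from Section~\ref{Clifford Theory} to obtain the tensor decomposition.

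For the vanishing, write $G_{\mathrm{der}}$ for the derived subgroup, $G_{\mathrm{sc}}$ for its simply connected cover, and $Z^\circ$ for the identity component of the centre, so there is a central isogeny $\pi:G_{\mathrm{sc}}\times Z^\circ\to G$ whose kernel $K$ is a finite group of multiplicative type with $|K|$ dividing $|\pi_1(G_{\mathrm{der}})|$. Passing to $\mathbb{F}$-points and using the long exact sequence of Galois cohomology produces an exact sequence
\[1\to K(\mathbb{F})\to G_{\mathrm{sc}}(\mathbb{F})\times Z^\circ(\mathbb{F})\to G(\mathbb{F})\to H^1(\mathrm{Gal}(\overline{\mathbb{F}}/\mathbb{F}),K).\]
Three observations feed the reduction: (a) the torus $Z^\circ(\mathbb{F})$ has order prime to $p$, since it embeds in $Z^\circ(\overline{\mathbb{F}})\cong(\overline{\mathbb{F}}^\times)^{\dim Z^\circ}$ which has no $p$-torsion; (b) for $p$ larger than $|\pi_1(G_{\mathrm{der}})|$, both $|K(\mathbb{F})|$ and $|H^1(\mathrm{Gal},K)|$ are prime to $p$; and (c) $|\pi_1(G_{\mathrm{der}})|$ is bounded purely in terms of the root system. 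Since $\mu_{p^\infty}$ is uniquely $p'$-divisible, any finite group of $p'$-order has trivial higher cohomology with coefficients in $\mu_{p^\infty}$. Two applications of Lyndon--Hochschild--Serre (once for each extension extracted from the sequence above) therefore collapse and yield
\[H^2(G(\mathbb{F}),\mu_{p^\infty})\hookrightarrow H^2(G_{\mathrm{sc}}(\mathbb{F})\times Z^\circ(\mathbb{F}),\mu_{p^\infty})\cong H^2(G_{\mathrm{sc}}(\mathbb{F}),\mu_{p^\infty}),\]
with the Künneth correction for the product likewise killed by $Z^\circ(\mathbb{F})$ having $p'$-order. The right hand side is the $p$-part of the Schur multiplier of a product of quasi-simple finite groups of Lie type in defining characteristic $p$; Steinberg's classical computation, together with the tables of exceptional multipliers, shows that this $p$-part is trivial whenever $p$ avoids a finite list of small primes depending only on the root system. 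As there are only finitely many connected reductive root data of dimension at most $m$, setting $C_{H^2}(m)$ to dominate all resulting bad primes gives the claim.

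For the tensor decomposition, fix $\rho\in\Irr(G(\mathbb{F}))$ whose restriction to $U(\mathbb{F})$ is isotypic, with isotypic component $\psi\in\Irr(U(\mathbb{F}))$. Then $\Stab_{G(\mathbb{F})}(\psi)=G(\mathbb{F})$ and, since $U$ is unipotent in characteristic $p$, $U(\mathbb{F})$ is a normal finite $p$-subgroup of $G(\mathbb{F})$. Lemma~\ref{frobenius factoring} identifies $G(\mathbb{F})/U(\mathbb{F})$ with $(G/U)(\mathbb{F})$, and $G/U$ is connected reductive of dimension less than $1+\dim G$; the first part of the lemma (applied with $m=1+\dim G$) gives $H^2((G/U)(\mathbb{F}),\mu_{p^\infty})=1$. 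The extension criterion from \cite[Lemma 9.0.2]{aizenbudavni22} recalled in Section~\ref{Clifford Theory} then produces an extension $\hat{\psi}\in\Irr(G(\mathbb{F}))$ of $\psi$, and part~(3) of Theorem~\ref{clifford theorem} supplies $\theta\in\Irr((G/U)(\mathbb{F}))$ with $\rho=\theta\otimes\hat{\psi}$.

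The main obstacle is the uniformity in the cohomological statement. For a single fixed root datum over a fixed field the argument above is essentially mechanical, but one must produce a single threshold $C_{H^2}(m)$ that works simultaneously for every $\mathbb{F}$-form of every connected reductive $G$ with $\dim G\leq m$ and every finite $\mathbb{F}$ of sufficiently large characteristic. The delicate point is the non-split case, where the cokernel term $H^1(\mathrm{Gal},K)$ in the isogeny sequence can be non-trivial and must be controlled uniformly; the finiteness of the classification of bounded-dimension root data, together with uniform bounds on fundamental groups and Steinberg's explicit Schur multiplier formulae, is what makes such a uniform threshold available.
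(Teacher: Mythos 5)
The paper itself offers no proof of this lemma: it is imported verbatim from \cite[Lemma 4.2.1, Corollary 4.2.6]{hadas24}, so your write-up is an independent reconstruction rather than something to be matched against an internal argument. In outline it is sound, and it buys the reader a self-contained justification of a black box the paper merely cites. The reduction along the isogeny $G_{\mathrm{sc}}\times Z^{\circ}\to G$ works for the reasons you give: once $p$ is large compared with the (finitely many) root data of dimension at most $m$, the kernel, the cokernel term $H^1(\mathrm{Gal}(\overline{\mathbb{F}}/\mathbb{F}),K)$ and the torus $Z^{\circ}(\mathbb{F})$ all have order prime to $p$, and since $\mu_{p^\infty}$ is uniquely $p'$-divisible while the relevant higher cohomology groups are $p$-primary, the two Lyndon--Hochschild--Serre applications do collapse as claimed, leaving $H^2(G_{\mathrm{sc}}(\mathbb{F}),\mu_{p^\infty})$, which Steinberg's computation of Schur multipliers in defining characteristic kills outside finitely many small $q$; finiteness of bounded-dimension root data then yields a uniform $C_{H^2}(m)$. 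The second half is precisely the mechanism the paper sets up in Section~\ref{Clifford Theory}, namely Lemma~\ref{frobenius factoring}, the extension criterion of \cite[Lemma 9.0.2]{aizenbudavni22} applied to the normal $p$-subgroup $U(\mathbb{F})$ with $\mathrm{Stab}=G(\mathbb{F})$, and Theorem~\ref{clifford theorem}(3); that part is correct as you state it (with $G$ connected, as in the paper's application).

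One claim does need repair: the kernel $K$ of $G_{\mathrm{sc}}\times Z^{\circ}\to G$ need not have order dividing $|\pi_1(G_{\mathrm{der}})|$; for $G=\mathrm{GL}_n$ one gets $K\cong\mu_n$ while $\pi_1(\mathrm{SL}_n)$ is trivial. The correct statement, which is all your argument actually uses, is that projection to the first factor identifies $K$ with $\pi^{-1}(G_{\mathrm{der}}\cap Z^{\circ})\subseteq Z(G_{\mathrm{sc}})$, so $|K|$ is bounded by $|Z(G_{\mathrm{sc}})|$ and hence in terms of $m$ alone; with that substitution the threshold exists as claimed. Two smaller points are worth making explicit in a final version: Lang's theorem is what guarantees that the sequence of $\mathbb{F}$-points ends as you wrote it (indeed the cokernel is all of $H^1$), and for $p>|K|$ the kernel is \'etale, so ordinary Galois cohomology is the right theory; and in the spectral-sequence step for the $p'$-quotient $Q$ you should record that $H^j(N,\mu_{p^\infty})$ is $p$-primary, which is what forces $H^i(Q,H^j(N,\mu_{p^\infty}))=0$ for $i\geq 1$.
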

\begin{lemma}\label{unipotent groups}\cite[Corollary 4.3.2]{hadas24}
For all $n',M\geq 1$, there exists $C_\mathrm{uni}(n',M)\geq 1$ such that for every finite field, $\mathbb{F}$, of characteristic greater than $C_\mathrm{uni}(n',M)$, and every unipotent subgroup, $U$, of $\mathrm{GL}_{n'}(\overline{\mathbb{F}})$ which is defined over $\mathbb{F}$ and has complexity at most $M$,
\[\{\dim\rho\mid\rho\in\Irr(U(\mathbb{F}))\}\subseteq\{|\mathbb{F}|^k\mid 0\leq k\leq (n')^2\}.\]
\end{lemma}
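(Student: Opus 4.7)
The plan is to apply the Kirillov orbit method, which parameterises irreducible representations of a unipotent group by coadjoint orbits once the characteristic is sufficiently large relative to the nilpotency class. Dimensions are then automatically square roots of orbit sizes, and the symplectic nature of coadjoint orbits forces these to be powers of $q=|\mathbb{F}|$.

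First, any unipotent subgroup $U\subseteq\mathrm{GL}_{n'}$ has nilpotency class at most $n'-1$, since any unipotent matrix $u$ satisfies $(u-1)^{n'}=0$. For $p>n'$, the Baker-Campbell-Hausdorff series terminates within $\mathfrak{u}=\mathrm{Lie}(U)$ and its coefficients have denominators invertible in $\mathbb{F}$. The truncated exponential and logarithm therefore provide mutually inverse, $U$-equivariant polynomial bijections $\mathfrak{u}(\mathbb{F})\leftrightarrow U(\mathbb{F})$, allowing representations of $U(\mathbb{F})$ to be studied via the Lie algebra $\mathfrak{u}(\mathbb{F})$.

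Next, I would invoke the Kirillov orbit method for finite unipotent groups of large characteristic (going back to Kazhdan). Fix a non-trivial additive character $\psi:\mathbb{F}\to\mathbb{C}^\times$. For each $\lambda\in\mathfrak{u}^*(\mathbb{F})$, choose a polarising subalgebra $\mathfrak{h}\subseteq\mathfrak{u}$ for the skew form $(X,Y)\mapsto\lambda([X,Y])$, and induce the character $\exp(X)\mapsto\psi(\lambda(X))$ of $\exp(\mathfrak{h})(\mathbb{F})$ to $U(\mathbb{F})$. The resulting representation $\rho_\lambda$ is irreducible, depends only on the $U(\mathbb{F})$-coadjoint orbit of $\lambda$, and the assignment $\lambda\mapsto\rho_\lambda$ descends to a bijection from coadjoint orbits to $\Irr(U(\mathbb{F}))$, with
\[\dim\rho_\lambda=|U(\mathbb{F})\cdot\lambda|^{1/2}.\]
The stabiliser $\Stab_U(\lambda)$ is a connected unipotent subgroup of $U$, so Lang-Steinberg yields $|U(\mathbb{F})\cdot\lambda|=q^d$, where $d=\dim U-\dim\Stab_U(\lambda)$ is the dimension of the coadjoint orbit. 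The Kirillov-Kostant symplectic form on the orbit forces $d$ to be even, so $\dim\rho_\lambda=q^{d/2}$ with $d/2\leq\dim U/2\leq(n')^2/2\leq(n')^2$, which gives the claimed inclusion.

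The main obstacle is establishing the Kirillov correspondence uniformly across all unipotent subgroups of $\mathrm{GL}_{n'}$ of complexity at most $M$. The BCH bound $p>n'$ suffices for the exponential map alone, but one also needs $p$ large enough for polarising subalgebras to exist over $\mathbb{F}$ and for the orthogonality arguments identifying the induced modules with genuine elements of $\Irr(U(\mathbb{F}))$ to go through. The complexity bound $M$ enters precisely at this point: it ensures that the auxiliary algebraic data (polarisations, stabilisers, the commutator ideal filtration needed to iterate the induction step) can be described by polynomials of bounded degree, so that a single threshold $C_\mathrm{uni}(n',M)$ on $p$ works uniformly across the entire class.
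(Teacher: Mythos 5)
Two remarks, one about context and one about a real gap. On context: the paper does not prove this lemma at all --- it is imported verbatim from \cite[Corollary 4.3.2]{hadas24} --- so your sketch is being judged on its own merits rather than against an in-paper argument. For \emph{connected} $U$ your route is the standard one and essentially sound once $p$ is large compared with $n'$: by Kolchin's theorem $U$ embeds in the unitriangular group, so its nilpotency class is at most $n'-1$; truncated $\exp/\log$ and BCH then identify $U(\mathbb{F})$ with its Lie algebra; Vergne's construction produces polarisations over any field (no complexity hypothesis is needed for that step, contrary to your closing paragraph); and $\dim\rho_\lambda=|U(\mathbb{F})\cdot\lambda|^{1/2}$ together with even-dimensional orbits of $q$-power size gives the inclusion, with the exponent bounded by $\dim U\leq (n')^2$.

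The genuine gap is connectedness. The lemma quantifies over all unipotent subgroups of $\mathrm{GL}_{n'}(\overline{\mathbb{F}})$ of complexity at most $M$, and every Lie-theoretic step in your argument ($\mathrm{Lie}(U)$ capturing $U(\mathbb{F})$, connectedness of $\Stab_U(\lambda)$, Lang--Steinberg, $|U(\mathbb{F})|=q^{\dim U}$) silently assumes $U$ is connected. Your proof never uses the complexity bound in a load-bearing way (the appeal to it at the end is vague), so if it were complete it would prove the statement with a threshold depending on $n'$ alone --- and that statement is false. For example, take $U\subseteq\mathrm{GL}_3$ to be the finite reduced subgroup scheme whose points are $\mathrm{U}_3(\mathbb{F}_p)$, viewed over $\mathbb{F}=\mathbb{F}_{p^2}$: it is a zero-dimensional unipotent subgroup defined over $\mathbb{F}$, and $U(\mathbb{F})$ is the Heisenberg group of order $p^3$, which has irreducible representations of dimension $p$, not a power of $q=p^2$. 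The complexity hypothesis is exactly what excludes this: a B\'ezout-type bound (the same \cite[Lemma 11]{tao} used in Step 1 of the proof of Theorem \ref{hadas polynomial version}) bounds the number of connected components of a subgroup of complexity at most $M$ by a constant $c(n',M)$ independent of $p$, while $\pi_0(U)$ is a $p$-group because every element of $U$ is unipotent; hence for $p>c(n',M)$ the group $U$ is automatically connected (and smooth, being reduced over a perfect field). Inserting this reduction at the start repairs the argument; without it, the proof does not establish the lemma as stated.
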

\begin{lemma}\label{stabiliser is algebraic subgroup}\cite[Lemma 4.3.4]{hadas24}
Let $n',M\in\mathbb{N}$. There exist $C_\mathrm{noniso}(n',M)>0$ and $C_\mathrm{stab}(n',M)>0$ such that for every prime $p>C_\mathrm{stab}(n',M)$, every finite field $\mathbb{F}$ of characteristic $p$, every connected algebraic subgroup $G$ of $GL_{n'}(\overline{\mathbb{F}})$ defined over $\mathbb{F}$ and of complexity at most $M$, and every $\chi\in\Irr(R_u(G))(\mathbb{F})$, there exists an algebraic subgroup $K$ of $G$, defined over $\mathbb{F}$ and of complexity at most $C_\mathrm{noniso}(n',M)$, such that $\Stab_{G(\mathbb{F})}(\chi)=K(\mathbb{F})$.
\end{lemma}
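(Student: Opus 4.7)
The plan is to apply the Kirillov orbit method, valid for large $p$, to identify $\chi$ with a coadjoint orbit in $\mathfrak{u}^* = \mathrm{Lie}(U)^*$ (where $U = R_u(G)$), so that the stabiliser of $\chi$ becomes the $\mathbb{F}$-points of an algebraic stabiliser whose complexity can be controlled uniformly in $n'$ and $M$.

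First, since the complexity of $G$ is at most $M$, the dimension and hence the nilpotency class of $U$ are bounded by some function of $n'$ and $M$. I would choose $C_\mathrm{stab}(n',M)$ to exceed this bound. Then for $p > C_\mathrm{stab}(n',M)$, the truncated exponential defines a $G$-equivariant isomorphism $\exp:\mathfrak{u}\to U$ of $\mathbb{F}$-varieties, and Kazhdan's version of the Kirillov orbit method yields a $G(\mathbb{F})$-equivariant bijection
\[\Irr(U(\mathbb{F}))\longleftrightarrow U(\mathbb{F})\backslash\mathfrak{u}^*(\mathbb{F}),\]
where $G$ acts on $\mathfrak{u}^*$ by the coadjoint action. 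Choose $\lambda\in\mathfrak{u}^*(\mathbb{F})$ whose $U(\mathbb{F})$-orbit corresponds to $\chi$.

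Next, consider the scheme-theoretic stabiliser $\Stab_G(\lambda)$, a closed $\mathbb{F}$-subgroup of $G$. Because $U$ is normal in $G$, the set $K := U\cdot\Stab_G(\lambda)$ is itself a closed $\mathbb{F}$-subgroup of $G$. Lang's theorem, applied to the connected unipotent group $U$ (whose coadjoint point-stabilisers are connected in the chosen range of $p$, via the exponential map), yields $(U\cdot\lambda)(\mathbb{F}) = U(\mathbb{F})\cdot\lambda$. Combined with the $G(\mathbb{F})$-equivariance of the Kirillov bijection, this gives $K(\mathbb{F}) = \Stab_{G(\mathbb{F})}(\chi)$ by a direct check: any $g \in K(\mathbb{F})$ has $g\cdot\lambda$ in the geometric orbit and $\mathbb{F}$-rational, hence in $U(\mathbb{F})\cdot\lambda$, so $g$ preserves the $U(\mathbb{F})$-orbit of $\lambda$ and thus fixes $\chi$; the converse is immediate.

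The principal obstacle is the uniform complexity bound. The coadjoint action is given by polynomials of degree bounded by a function of $n'$ and $M$, so $\Stab_G(\lambda)$ is cut out inside $G$ by equations of controlled degree. Bounding the complexity of the product $U\cdot\Stab_G(\lambda)$, which is the image of the multiplication morphism $U\times\Stab_G(\lambda)\to G$, then requires an effective form of elimination of quantifiers (an effective Chevalley-type theorem within Tao's complexity framework, already developed and used by Hadas in \cite{hadas24}); feeding this into the previous steps yields a bound $C_\mathrm{noniso}(n',M)$ depending only on $n'$ and $M$, as required.
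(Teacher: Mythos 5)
This lemma is not proved in the paper at all: it is imported verbatim from \cite[Lemma 4.3.4]{hadas24}, so there is no in-paper argument to measure your proposal against; the comparison can only be with the source. That said, your route is the natural one (and, as far as the source's Section 4.3 goes, essentially the intended one): for $p$ larger than the nilpotency class of $U=R_u(G)$, the Lazard/Kirillov correspondence identifies $\Irr(U(\mathbb{F}))$ with $U(\mathbb{F})$-orbits in $\mathfrak{u}^*(\mathbb{F})$ equivariantly for automorphisms of $U$ defined over $\mathbb{F}$, in particular for conjugation by $G(\mathbb{F})$; the stabiliser of $\chi$ is then the set of rational points of the algebraic orbit-stabiliser $K=U\cdot\Stab_G(\lambda)$ (a closed $\mathbb{F}$-subgroup since $U$ is normal and the $U$-orbit of $\lambda$ is closed), and the identity $K(\mathbb{F})=\Stab_{G(\mathbb{F})}(\chi)$ reduces to $(U\cdot\lambda)(\mathbb{F})=U(\mathbb{F})\cdot\lambda$, which follows from Lang's theorem once one knows $\Stab_U(\lambda)$ is connected; your observation that this stabiliser is $\exp$ of the radical of the form $x\mapsto\lambda([x,\cdot\,])$, hence connected, is exactly the point that makes this work. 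I see no incorrect step in this part, though the $G(\mathbb{F})$-equivariance of the correspondence deserves a sentence (it is visible from the Kirillov character formula, which is canonical once an additive character of $\mathbb{F}$ is fixed).

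The place where the proposal is genuinely thin is the quantitative half, i.e.\ the existence of $C_\mathrm{noniso}(n',M)$, which is really the content of the cited lemma. You need, uniformly in $n'$, $M$ and over all $\mathbb{F}$ of large characteristic: (a) that $R_u(G)$, its Lie algebra, and hence the coadjoint action morphism have complexity bounded in terms of $n'$ and $M$ alone (forming the unipotent radical with controlled complexity is itself a nontrivial effective statement); (b) a bound on the degrees of generators of the vanishing ideal of $\Stab_G(\lambda)$, since the obvious bounded-degree equations $g\cdot\lambda=\lambda$ only cut it out set-theoretically and complexity is defined via the vanishing ideal; and (c) an effective Chevalley/closure bound for the image of $U\times\Stab_G(\lambda)\to G$. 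You defer all of these to the effective elimination machinery in Tao's complexity framework as used in \cite{hadas24}; that is a legitimate pointer (those tools are independent of the lemma being proved, so there is no circularity), but it means the proposal is a correct outline whose technical heart is outsourced rather than a self-contained proof.
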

\begin{lemma}\label{reductive groups}\cite[Lemma 4.4.2]{hadas24}
For every integer $M\geq 0$, there exists $C_{\mathrm{red}}(M)\geq 0$ such that for every algebraically closed field $k$, there exist at most $C_{\mathrm{red}}(M)$ reductive groups over $k$ of dimension at most $M$.
\end{lemma}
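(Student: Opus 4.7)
The plan is to appeal to the Chevalley classification: over any algebraically closed field, isomorphism classes of connected reductive groups correspond bijectively to isomorphism classes of root data $(X,\Phi,X^\vee,\Phi^\vee)$, and this correspondence is field-independent. So it suffices to bound, in terms of $M$ alone, the number of root data whose associated group has dimension at most $M$.

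From $\dim G = \dim T + |\Phi|$ for a maximal torus $T$, the constraint $\dim G \leq M$ yields $\mathrm{rank}(X) \leq M$ and $|\Phi| \leq M$. The Cartan--Killing classification then provides finitely many possibilities for the root system $\Phi$ up to isomorphism. Fix such a $\Phi$ and let $Q = \mathbb{Z}\Phi$ denote the root lattice and $P$ the corresponding weight lattice in $\mathbb{Q}\Phi$, so that $P/Q$ is a finite group. The character lattice $X$ in any realising root datum must contain $Q$, have total rank at most $M$, and the torsion subgroup of $X/Q$ must embed into the finite group $P/Q$. Hence $X/Q$ is a finitely generated abelian group of bounded rank and bounded torsion, giving only finitely many isomorphism types. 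The extensions $0 \to Q \to X \to X/Q \to 0$ are then classified up to isomorphism by the finite group $\mathrm{Ext}^1(X/Q,Q)$, and once $(X,\Phi)$ is fixed the coroot data $(X^\vee,\Phi^\vee)$ is determined up to finite ambiguity by the root-datum axioms. Assembling these counts gives a bound $C_{\mathrm{red}}(M)$ depending only on $M$.

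The main subtlety, such as it is, is that the character lattice $X$ need not split as a direct sum of a semisimple piece (between $Q$ and $P$) and a free central piece---the central torus can ``twist'' into the semisimple direction via a non-split extension, as seen for quotients such as $(\mathrm{SL}_2 \times \mathbb{G}_m)/\mu_2$. However, the relevant Ext groups are finite because they are computed between finitely generated abelian groups whose ranks and torsion are controlled by $M$, so this causes no serious obstacle to the counting argument.
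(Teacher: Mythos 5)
The paper does not actually prove this lemma---it is quoted directly from \cite[Lemma 4.4.2]{hadas24}---and the argument you give is the standard one behind the cited result: connected reductive groups over an algebraically closed field are classified, independently of the field, by their root data, and there are only finitely many root data with $\operatorname{rank}(X)+|\Phi|\leq M$, bounded via the Cartan--Killing classification, the finiteness of $P/Q$, and the finiteness of $\mathrm{Ext}^1(X/Q,Q)$. Your proposal is correct as a sketch; the only remarks worth making are that ``reductive'' must indeed be read as \emph{connected} reductive, exactly as you do (otherwise dimension $0$ already gives infinitely many finite groups), and that the final step is even cleaner than you state, since for a fixed pair $(X,\Phi)$ the coroots $\Phi^\vee\subseteq X^\vee=\Hom(X,\mathbb{Z})$ are uniquely determined by the root-datum axioms, so there is no ambiguity left to bound.
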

Given an algebraic group $G$ and an algebraic subgroup $K$ both defined over $\mathbb{F}_p$ for some prime $p$, we will want to describe the behaviour of $[G(\mathbb{F}_{p^d}):K(\mathbb{F}_{p^d})]$ as $d$ varies over $\mathbb{N}$. This does not necessarily define a polynomial function, but one with a weaker property which we will define. Higman \cite{higman60} defined the notion of a PORC function, however we modify the definition for convenience here.
\begin{definition}
Let $q$ be a prime power. A function $f:\{q^d\mid d\in \mathbb{N}\}\to\mathbb{Z}$ is \textbf{$q$-PORC} (polynomial on residue classes) if there exist $N\in\mathbb{N}$ and polynomials $g_0,\dots,g_{N-1}\in\mathbb{Q}[x]$ such that for all $d,r\geq 1$ with $n\equiv r\bmod N$, ${f(q^d)=g_r(q^d)}$. We call $N$ the \textbf{period} of $f$ and $g_0,\dots,g_{N-1}$ the \textbf{constituents} of~$f$.
\end{definition}
\begin{proposition}\label{brion peyre}\cite[Theorem 1.2, Remark 5.1(ii)]{brionpeyre08}
Let $G$ be an algebraic group defined over a finite field $\mathbb{F}_q$ and let $X$ be a variety with homogeneous $G$-action. Then the function
\[q^d\mapsto X(\mathbb{F}_{q^d})\]
is a $q$-PORC function. Its period is bounded above by a constant that depends only on the rank of $G$.
\end{proposition}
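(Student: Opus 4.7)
The plan is to reduce the count $|X(\mathbb{F}_{q^d})|$ to orders of groups of $\mathbb{F}_{q^d}$-points via Galois cohomology, and then combine the structure theory of algebraic groups with a Minkowski-type period bound. First, I would fix a geometric point $x_0 \in X(\overline{\mathbb{F}_q})$, let $H = \Stab_G(x_0)$, and identify $X \cong G/H$ as $G$-varieties. After enlarging the base to a controlled extension $\mathbb{F}_{q^{d_0}}$ over which $x_0$ is rational, I would analyse the pointed cohomology sequence attached to $1 \to H \to G \to X \to 1$ to obtain
\[X(\mathbb{F}_{q^d}) = \bigsqcup_{\xi} G(\mathbb{F}_{q^d})/H_\xi(\mathbb{F}_{q^d}),\]
where $\xi$ runs over the kernel of $H^1(\mathbb{F}_{q^d}, H) \to H^1(\mathbb{F}_{q^d}, G)$ and $H_\xi$ denotes the inner twist of $H$ by $\xi$.

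Next, I would show that $q^d \mapsto |G(\mathbb{F}_{q^d})|$ and $q^d \mapsto |H_\xi(\mathbb{F}_{q^d})|$ are each $q$-PORC. For the identity component of a connected algebraic group, the Chevalley decomposition yields an expression of the form $q^{d\dim R_u}\prod_i(q^{d f_i}-\zeta_i^d)$, where the $\zeta_i$ are roots of unity appearing as Frobenius eigenvalues on the character lattice of a maximal torus; this function is PORC in $d$ with period equal to the lcm of the orders of the $\zeta_i$. The general case follows by combining this with the Frobenius action on the finite component group, which only contributes finitely many further residue classes.

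Then I would use Lang's theorem to kill $H^1(\mathbb{F}_{q^d}, H^\circ)$, so that the kernel appearing in the Galois cohomology sequence is governed solely by $\pi_0(H)$ and its Frobenius action. This yields finitely many twist classes, each contributing a PORC term, and the disjoint union gives a finite sum of PORC functions, which is again PORC. At this stage the PORC conclusion is established, and only the explicit period bound remains.

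The step I expect to be the main obstacle is controlling the period purely in terms of $\mathrm{rank}(G)$. The periods of the constituents arise as lcm's of orders of Frobenius eigenvalues on character lattices and of Frobenius-induced automorphisms of component groups, and these orders are controlled by finite subgroups of $\mathrm{GL}_n(\mathbb{Z})$ for $n=\mathrm{rank}(G)$ (up to a bounded correction from $\pi_0$). Minkowski's bound on torsion in $\mathrm{GL}_n(\mathbb{Z})$ then supplies a bound depending only on $n$, and combining these across the finitely many twists gives the uniform period bound claimed in the proposition.
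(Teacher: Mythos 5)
You should first note that the paper does not prove this proposition at all: it is quoted verbatim from Brion and Peyre (their Theorem 1.2 and Remark 5.1(ii)), so your attempt is being measured against that reference rather than an internal argument. Your overall strategy --- identify $X$ with $G/H$, decompose $X(\mathbb{F}_{q^d})$ into $G(\mathbb{F}_{q^d})$-orbits indexed by the kernel in $H^1$, establish that $|G(\mathbb{F}_{q^d})|$ and $|H_\xi(\mathbb{F}_{q^d})|$ are $q$-PORC via the Chevalley decomposition and Steinberg-type order formulas, and control periods with Minkowski's theorem on finite subgroups of $\mathrm{GL}_n(\mathbb{Z})$ --- is a sensible reconstruction, and the PORC assertion itself is essentially within reach of it. Two technical points still need repair, though: (i) for disconnected $G$ you cannot simply base-change to an $\mathbb{F}_{q^{d_0}}$ over which $x_0$ is rational, since the identification $X\cong G/H$ is then Frobenius-equivariant only for $d_0\mid d$ and $X(\mathbb{F}_{q^d})$ may be empty on other residue classes; for connected $G$ Lang's theorem already gives a rational point over $\mathbb{F}_q$, and in general one should count Frobenius-fixed points on $X(\overline{\mathbb{F}_q})$ directly. (ii) Stabilizers in characteristic $p$ need not be smooth (e.g. $\mu_p\subseteq\mathbb{G}_m$ acting through a power of Frobenius), so the torsor bookkeeping should be done in flat cohomology or reduced to the smooth case before Lang's theorem is invoked.

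The genuine gap is the period bound, exactly where you anticipated trouble. Minkowski controls the orders of the Frobenius eigenvalues on character lattices, which is a bound in terms of $\mathrm{rank}(G)$; but your accounting also includes the order of the automorphism induced by Frobenius on $\pi_0(H)$ (and on $\pi_0(G)$, and its permutation of the $G^\circ$-orbits of $X$), and these are \emph{not} bounded in terms of the rank: already for $H=\mu_m\subseteq\mathbb{G}_m=G$ the Frobenius acts on $\pi_0(H)$ with order equal to the multiplicative order of $q$ modulo $m$, unbounded while $\mathrm{rank}(G)=1$. Calling this ``a bounded correction from $\pi_0$'' is therefore not a proof. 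What rescues the statement in the connected case is that these $\pi_0$-contributions cancel in the total count --- e.g. $\mathbb{G}_m/\mu_m\cong\mathbb{G}_m$ has exactly $q^d-1$ points, with no trace of the large period of the twisted-class data --- and proving this cancellation (via the mass formula for $F^d$-twisted classes in $\pi_0(H)$, or by following Brion--Peyre's actual argument, which bounds the period by the order of Frobenius on a suitable lattice before invoking Minkowski) is the real content of the period claim; it is missing from your sketch. Note also that for disconnected $G$ a rank-only bound cannot be literally true: a $(\mathbb{Z}/m)$-torsor over $\mathbb{F}_q$, e.g. $X=\Spec\mathbb{F}_{q^m}$, has counting function $m$ when $m\mid d$ and $0$ otherwise, forcing period at least $m$ while $\mathrm{rank}(G)=0$. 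So any correct proof must treat connected $G$ and handle component groups by a separate mechanism --- which is also the honest reading of how the proposition gets used later in the paper, where the period of $f_{K/K^\circ}$ is in effect controlled by the complexity bound on $|\pi_0(K)|$ rather than by rank.
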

The statement in \cite{brionpeyre08} states further that the constituents of this $q$-PORC function have integer coefficients.
\par The following result states that for $p$ prime, the values taken by any set of $p$-~PORC functions whose period is bounded above are also taken by some finite set of polynomials.
\begin{lemma}[$p$-PORC Lemma]\label{p-PORC lemma}
Let $p$ be a prime and $\{f_i\}_{i\in I}$ be a set of $p$-PORC functions $\{p^d\mid d\geq 1\}\to\mathbb{Z}$ such that there exists $N>0$ and for all $i\in I$, the period of $f_i$ is at most $N$. Suppose further that there exists $C>0$ such that for all $d\geq 1$, $|\{f_i(p^d)\mid i\in I\}|\leq C$. Then there exists a finite set of polynomials $A\subseteq\mathbb{Q}[x]$ such that for every $i\in I,d\geq 1$, there exists $g\in A$ such that $f_i(p^d)=g(p^d)$.
\end{lemma}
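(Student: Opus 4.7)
The plan is to reduce every $f_i$ to a common period and then, within each residue class modulo that period, bound the number of distinct constituent polynomials using a simple value-at-infinitely-many-points argument. Set $N^{*}=\mathrm{lcm}(1,2,\ldots,N)$; since every $f_i$ has period $N_i\leq N$ dividing $N^{*}$, each $f_i$ also admits $N^{*}$ as a period. Thus, after relabelling (replacing the original constituents $g_{i,0},\ldots,g_{i,N_i-1}$ by $g'_{i,r}=g_{i,r\bmod N_i}$ for $r\in\{0,\ldots,N^{*}-1\}$), we may write $f_i(p^d)=g_{i,r}(p^d)$ whenever $d\equiv r\pmod{N^{*}}$, for some constituents $g_{i,0},\ldots,g_{i,N^{*}-1}\in\mathbb{Q}[x]$.

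Next, fix $r\in\{0,1,\ldots,N^{*}-1\}$ and set $\mathcal{G}_r=\{g_{i,r}:i\in I\}\subseteq\mathbb{Q}[x]$. The key claim is that $|\mathcal{G}_r|\leq C$. Indeed, suppose for contradiction that $h_1,\ldots,h_{C+1}\in\mathcal{G}_r$ were pairwise distinct. For each pair $j\neq k$, the polynomial $h_j-h_k$ is nonzero and hence has only finitely many roots. Since there are infinitely many $d\geq 1$ with $d\equiv r\pmod{N^{*}}$, and only finitely many such $d$ could have $p^d$ a root of any $h_j-h_k$, one can choose such a $d$ for which $p^d$ avoids every root of every $h_j-h_k$. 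At this $d$, the values $h_1(p^d),\ldots,h_{C+1}(p^d)$ are pairwise distinct, yielding $C+1$ distinct values in $\{f_i(p^d):i\in I\}$, contradicting the hypothesis.

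Taking $A=\bigcup_{r=0}^{N^{*}-1}\mathcal{G}_r$ then gives a set of at most $C\cdot N^{*}$ polynomials in $\mathbb{Q}[x]$; for any $i\in I$ and $d\geq 1$, letting $r$ be the residue of $d$ modulo $N^{*}$ yields $f_i(p^d)=g_{i,r}(p^d)$ with $g_{i,r}\in A$. The argument is essentially elementary, resting only on the fact that distinct polynomials agree at all but finitely many points combined with the pigeonhole principle; the only conceptual ingredient is the period-unification step, and I anticipate no genuine obstacle.
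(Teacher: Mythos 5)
Your proof is correct and follows essentially the same route as the paper: pass to a common period, fix a residue class, and combine the hypothesis $|\{f_i(p^d)\mid i\in I\}|\leq C$ with the fact that distinct polynomials agree at only finitely many of the points $p^d$. Your variant is in fact slightly cleaner, since establishing the a priori bound $|\mathcal{G}_r|\leq C$ lets you take all constituents in each class and cover every $d\geq 1$ uniformly, whereas the paper's argument only works for $d\geq d_0$ and then patches the finitely many small $d$ by adjoining constant polynomials to $A$.
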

\begin{proof}
Without loss of generality, let $N$ be the least common multiple of the periods of the $f_i$. Fix any residue class $m$ modulo $N$. Then there exist polynomials $g_{m,i}$, for $i\in I$ such that for all $d\equiv m\bmod N$, $f_i(p^d)=g_{m,i}(p^d)$. Let
\[C_m=\max_{d\equiv m\bmod N}|\{f_i(p^d)\mid i\in I\}|.\]
Choose $C_m$ distinct polynomials from the set $\{g_{m,i}\mid i\in I\}$, say ${A_m=\{g_{m,j}|j\in J\}}$ where $J\subseteq I$ and $|J|=C_m$. Then there exists $d_m>0$ such that for all $j_1,j_2\in J$ with $j_1\neq j_2$ and all $d\geq d_m$, we have $g_{m,j_1}(p^d)\neq g_{m,j_2}(p^d)$.\\
Write $d_0=\max_m d_m$ and $A^+=\bigcup_{m\in\mathbb{Z}/N\mathbb{Z}}A_m$. Then for all $d\geq d_0$ and all $i\in I$, there exists $g\in A$ such that $f_i(p^d)=g(p^d)$. Now put
\[A=A^+\cup\bigcup_{i\in I} \{f_i(p^d)\mid d<d_0\},\]
which is still a finite set by assumption.
\end{proof}
\noindent We also remark that the quotient of two $p$-PORC functions is again $p$-PORC.
By \cite[Theorem 4.0.2]{hadas24}, there exists a bound, uniform in $|\mathbb{F}|$, on the cardinality of the set
\[\bigcup_{G\in\mathcal{A}(n',M_{\mathrm{dim}},M_{\mathrm{cmp}},\mathbb{F})}\mathrm{dimirr}(G(\mathbb{F})).\]
The following result states further that the dimensions of the representations of $G(\mathbb{F})$ are given by evaluating finitely many polynomials at $|\mathbb{F}|$. It is the main step in proving Theorem \ref{weak polynomial for large p}.
\begin{theorem}\label{hadas polynomial version}
For all $n',M_{\mathrm{dim}},M_{\mathrm{cmp}}\in\mathbb{N}$, there exists $C>0$ such that for every prime $p>C$, there exists a finite set of polynomials $R_{n',M_{\mathrm{dim}},M_{\mathrm{cmp}}}\subseteq\mathbb{Q}[x]$ such that for every finite field $\mathbb{F}$ of characteristic $p$,
\begin{equation}\label{main equation}
\bigcup_{G\in\mathcal{A}(n',M_{\mathrm{dim}},M_{\mathrm{cmp}},\mathbb{F})}\mathrm{dimirr}(G(\mathbb{F}))\subseteq\left\{m(|\mathbb{F}|)\middle| m\in R_{n',M_{\mathrm{dim}},M_{\mathrm{cmp}}}\right\}.
\end{equation}
\end{theorem}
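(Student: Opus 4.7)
The plan is to argue by strong induction on the pair $(M_{\dim}, \kappa)$ ordered lexicographically, where $\kappa$ is an upper bound on $|\pi_0(G)|$ for groups $G\in\mathcal{A}(n',M_{\dim},M_{\cmp},\mathbb{F})$ (itself controlled by $M_{\cmp}$). Take $p$ larger than each of $C_{H^2}(1+M_{\dim})$, $C_{\mathrm{uni}}(n',M_{\cmp})$, $C_{\mathrm{stab}}(n',M_{\cmp})$, so that Lemmas \ref{H^2 for reductive groups}, \ref{unipotent groups} and \ref{stabiliser is algebraic subgroup} apply. For each $G\in\mathcal{A}(n',M_{\dim},M_{\cmp},\mathbb{F})$ and $\rho\in\Irr(G(\mathbb{F}))$, apply Clifford theory with $U(\mathbb{F}):=R_u(G)(\mathbb{F})\lhd G(\mathbb{F})$ and pick $\psi\in\Irr(U(\mathbb{F}))$ lying beneath $\rho$.

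If $\Res^G_U\rho$ is isotypic, Lemma \ref{H^2 for reductive groups} writes $\rho=\hat\psi\otimes\theta$ with $\theta$ pulled back from $(G/U)(\mathbb{F})$. Lemma \ref{unipotent groups} gives $\dim\hat\psi=|\mathbb{F}|^k$ for some $0\le k\le (n')^2$, contributing finitely many monomials. For $\dim\theta$, Lemma \ref{reductive groups} leaves only finitely many reductive quotients $G/U$ up to isomorphism over $\overline{\mathbb{F}_p}$; Theorem \ref{weak polynomial for lie type} supplies a finite polynomial set for the identity component of each, and a further Clifford step on the (uniformly bounded) component group assembles these into finitely many polynomials for $\dim\theta$. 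Products across the two factors yield the desired polynomial family.

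If $\Res^G_U\rho$ is non-isotypic, Lemma \ref{stabiliser is algebraic subgroup} produces an $\mathbb{F}$-defined proper algebraic subgroup $K\subsetneq G$ of complexity at most $C_{\mathrm{noniso}}(n',M_{\cmp})$ with $\Stab_{G(\mathbb{F})}(\psi)=K(\mathbb{F})$, and Clifford theory yields $\dim\rho=[G(\mathbb{F}):K(\mathbb{F})]\cdot\dim\sigma$ for some $\sigma\in\Irr(K(\mathbb{F}))$. Proposition \ref{brion peyre}, applied to the homogeneous variety $G/K$ and to $G$ itself, renders both group orders $p$-PORC in $|\mathbb{F}|$ of uniformly bounded period, hence so is the index. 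When $\dim K<\dim G$, the inductive hypothesis applied to $K\in\mathcal{A}(n',\dim K,C_{\mathrm{noniso}}(n',M_{\cmp}),\mathbb{F})$ expresses $\dim\sigma$ via a finite polynomial set; multiplying by the $p$-PORC index gives a $p$-PORC family of uniformly bounded period, and since only boundedly many distinct values appear by \cite[Theorem 4.0.2]{hadas24}, the $p$-PORC Lemma \ref{p-PORC lemma} collapses this family to a finite polynomial set.

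The main obstacle is the remaining subcase $\dim K=\dim G$, in which the dimension parameter cannot decrease. Here $K^\circ=G^\circ$, so $[G(\mathbb{F}):K(\mathbb{F})]\le|(G/G^\circ)(\mathbb{F})|$ is bounded by $\kappa$, while the strict inclusion $K\subsetneq G$ forces $|\pi_0(K)|<|\pi_0(G)|$, enabling induction on the second coordinate of the lexicographic invariant. The iteration terminates because once a group is connected ($\kappa=1$) the equalities $\dim K=\dim G$ and $K^\circ=G^\circ$ force $K=G$, contradicting non-isotypy. Taking $R_{n',M_{\dim},M_{\cmp}}$ to be the (finite) union of polynomial sets produced on every branch of the induction completes the proof.
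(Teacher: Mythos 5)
Your overall architecture --- the Clifford dichotomy over $U(\mathbb{F})=R_u(G)(\mathbb{F})$, powers of $|\mathbb{F}|$ for the unipotent part, the Deligne--Lusztig polynomial bound for the reductive part, and the Brion--Peyre $p$-PORC index combined with a bounded-value count and Lemma \ref{p-PORC lemma} in the induced case --- matches the paper's proof. The genuine divergence is your treatment of disconnected groups, and that is where the gap lies. You run the induction over possibly disconnected $G\in\mathcal{A}(n',M_{\mathrm{dim}},M_{\mathrm{cmp}},\mathbb{F})$ and invoke Lemma \ref{stabiliser is algebraic subgroup} to produce $K$ with $\Stab_{G(\mathbb{F})}(\psi)=K(\mathbb{F})$, and Lemma \ref{H^2 for reductive groups} to split off the isotypic case; but both results are stated (and proved in \cite{hadas24}) only for \emph{connected} $G$ --- the $H^2$-vanishing that makes $\psi$ extend is established only for connected reductive quotients --- and Lemma \ref{reductive groups} likewise does not give finiteness once arbitrary component groups are allowed. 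The only situation in which your new device, the lexicographic induction on $(M_{\mathrm{dim}},|\pi_0(G)|)$ handling the subcase $\dim K=\dim G$, is ever needed is precisely when $G$ is disconnected: for connected $G$ a proper closed subgroup automatically has strictly smaller dimension (a closed connected $K^\circ\subseteq G$ of full dimension equals $G$). So in the one case your extra machinery is designed for, the lemmas you quote do not apply as stated, and you give no argument extending them.

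The paper sidesteps this entirely: its Step 1 reduces to connected $G$ at the outset, using \cite[Lemma 11]{tao} to bound $[G(\mathbb{F}):G^\circ(\mathbb{F})]$ by a constant $C_{\mathrm{ic}}(M_{\mathrm{cmp}})$ and Clifford theory to write each dimension for $G(\mathbb{F})$ as a sum of at most $C_{\mathrm{ic}}(M_{\mathrm{cmp}})$ dimensions for $G^\circ(\mathbb{F})$; after that, $\dim K<\dim G$ is automatic in the non-isotypic case and a single induction on $M_{\mathrm{dim}}$ suffices, with the complexity bound universally quantified in the inductive hypothesis to absorb the jump from $M_{\mathrm{cmp}}$ to $C_{\mathrm{noniso}}(n',M_{\mathrm{cmp}})$. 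Your lexicographic scheme could in principle be made to work, but only at the cost of proving disconnected analogues of Lemmas \ref{H^2 for reductive groups} and \ref{stabiliser is algebraic subgroup}, which is exactly the work the initial passage to $G^\circ$ renders unnecessary. A minor additional repair: the bounded-count hypothesis of Lemma \ref{p-PORC lemma} should be verified for the index functions themselves (the paper does this via \cite[Lemma 4.5.1]{hadas24}); citing \cite[Theorem 4.0.2]{hadas24} bounds the number of actual dimensions, not the number of values of every candidate product you feed into the lemma, though this is easily fixed by taking quotients of the two bounded value sets.
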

The proof follows that of \cite[Theorem 4.0.2]{hadas24}, with the difference that we employ the argument of Section \ref{Finite Groups of Lie Type} to deal with the reductive case, and we argue at each step that the dimensions of the representations constructed are in fact given by finitely many polynomials.
\begin{proof}
We argue by induction on $M_{\mathrm{dim}}$. For the base case, $M_{\mathrm{dim}}=0$, every zero-dimensional closed subgroup of $GL_{n'}(\overline{\mathbb{F}})$ is a discrete finite group, hence reductive. For the moment, fix a prime $p$. By Theorem \ref{weak polynomial for lie type}, for each $G\in\mathcal{A}(n',0,M_{\mathrm{cmp}},\mathbb{F})$ we can choose a finite set $R_G$ of polynomials such that for every finite field $\mathbb{F}$ of characteristic $p$,
\[\mathrm{dimirr}(G(\mathbb{F}))\subseteq\left\{m(|\mathbb{F}|)\middle| m\in R_G\right\}.\]
By Lemma \ref{reductive groups}, the set $\mathcal{A}(n',0,M_{\mathrm{cmp}},\mathbb{F})$ is finite with cardinality at most $C_\mathrm{red}(0)$, so we can let
\[R_{n',0,M_\mathrm{cmp}}=\bigcup_{G\in\mathcal{A}(n',0,M_\mathrm{cmp},\mathbb{F})}R_G.\]
We proceed with the inductive step.\\
\textbf{Step 1: Reduce to those $G$ which are connected, and neither reductive nor unipotent.}\\
Suppose for induction that the theorem is true for all dimensions strictly less than~$M_{\mathrm{dim}}$. By \cite[Lemma 11]{tao}, there exists $C_{\mathrm{ic}}(M_{\mathrm{cmp}})>0$ such that for every finite field~$\mathbb{F}$ and every $G\in\mathcal{A}(n',M_\mathrm{dim},M_\mathrm{cmp},\mathbb{F})$, we have \[[G(\mathbb{F}):G^\circ(\mathbb{F})]=|(G/G^\circ)(\mathbb{F})|\leq C_{\mathrm{ic}}(M_{\mathrm{cmp}}),\] and that the dimensions of the representations of $G(\mathbb{F})$ are sums of dimensions of representations of $G^\circ(\mathbb{F})$ with at most $C_{\mathrm{ic}}(M_{\mathrm{cmp}})$ terms. Therefore, it is sufficient to show expression (\ref{main equation}) modified to take the union over all ${G\in\mathcal{A}(n',M_\mathrm{dim},M_\mathrm{cmp},\mathbb{F})}$ which are connected.
\par As in the base case, we see that by Lemma \ref{reductive groups} there are at most $C_\mathrm{red}(M_\mathrm{dim})$ reductive groups $G\in\mathcal{A}(n',M_\mathrm{dim},M_\mathrm{cmp},\mathbb{F})$. We can use Theorem \ref{weak polynomial for lie type} to show that there exists a finite set $T_{\mathrm{red},M_{\mathrm{dim}}}\subseteq\mathbb{Q}[x]$ such that for every prime $p$ and every finite field $\mathbb{F}$ of characteristic $p$,
\[\bigcup_{\substack{G\in\mathcal{A}(n',M_{\mathrm{dim}},M_{\mathrm{cmp}},\mathbb{F})\\G \text{ reductive}}}\mathrm{dimirr}(G(\mathbb{F}))\subseteq\left\{m(|\mathbb{F}|)\middle| m\in T_{\mathrm{red},M_{\mathrm{dim}}}\right\}.\]
\par Consider the case where $G$ is unipotent. By Lemma \ref{unipotent groups}, there exists $C_\mathrm{uni}>0$ such that for all primes $p>C_\mathrm{uni}$ and every finite field $\mathbb{F}$ of characteristic $p$, there is a finite set $T_{\mathrm{uni},M_{\mathrm{dim}}}\subseteq\mathbb{Q}[x]$ such that
\[\bigcup_{\substack{G\in\mathcal{A}(n',M_{\mathrm{dim}},M_{\mathrm{cmp}},\mathbb{F})\\G \text{ unipotent}}}\mathrm{dimirr}(G(\mathbb{F}))\subseteq\left\{m(|\mathbb{F}|)\middle| m\in T_{\mathrm{uni},M_{\mathrm{dim}}}\right\}.\]
In fact, Lemma \ref{unipotent groups} implies that we can choose $T_{\mathrm{uni},M_\mathrm{dim}}\subseteq\{1,x,\dots,x^{(n')^2}\}$.\\
\textbf{Step 2:}\\
We can assume that $p$ is large in relation to $n',M_\mathrm{dim}$ and $M_\mathrm{cmp}$, therefore assume from now on that $p>\max\{C_{H^2}(1+M_\mathrm{dim}),C_\mathrm{stab}(n',M_\mathrm{cmp})\}$. Let $U=R_u(G)$ and suppose now that $U\neq 1,G$. Suppose $\rho\in\Irr(G(\mathbb{F}))$.
\par If $\Res^{G(\mathbb{F})}_{U(\mathbb{F})}\rho$ is isotypic, then since $p>C_{H^2}(1+M_\mathrm{dim})$, $\rho$ is a tensor product of a representation of $U(\mathbb{F})$ with a representation of $G(\mathbb{F})/U(\mathbb{F})=(G/U)(\mathbb{F})$ by Lemma \ref{H^2 for reductive groups}).
\par If $\Res^{G(\mathbb{F})}_{U(\mathbb{F})}\rho$ is not isotypic, then since $p>C_\mathrm{stab}(n',M_\mathrm{cmp})$, $\rho=\Ind^{G(\mathbb{F})}_{K(\mathbb{F})}\psi$ where $K$ is a proper algebraic subgroup of $G$ (so $\dim K<\dim G$), $\psi\in\Irr(K(\mathbb{F}))$ and $\mathrm{cmp}(K)\leq C_{\mathrm{noniso}}(n',M_{\mathrm{cmp}})$ by Lemma \ref{stabiliser is algebraic subgroup}.\\
\textbf{Step 2(a): The isotypic case}\\
Define $T_{\mathrm{iso},M_{\mathrm{dim}}}=\{m_1(x)m_2(x)|m_1\in T_{\mathrm{uni},M_\mathrm{dim}},m_2\in T_{\mathrm{red},M_{\mathrm{dim}}}\}$. Then since $U$ is unipotent and $G/U$ is reductive, for every finite field $\mathbb{F}$ of characteristic $p$,
\begin{align*}\bigcup_{\substack{G\in\mathcal{A}(n',M_\mathrm{dim},M_\mathrm{cmp},\mathbb{F})\\R_u(G)\neq 1,G}}&\{\dim\rho\mid\rho\in\Irr(G(\mathbb{F})),\Res^{G(\mathbb{F})}_{U(\mathbb{F})}\rho\text{ is isotypic}\}\\&\subseteq\{m(|\mathbb{F}|)\mid m\in T_{\mathrm{iso},M_{\mathrm{dim}}}\}.\end{align*}
\textbf{Step 2(b): The non-isotypic case}\\
Assume for induction that for all $M_{\mathrm{cmp}}'\in\mathbb{N}$, there exists a finite set of polynomials $T_{M_{\mathrm{dim}}-1,M_{\mathrm{cmp}}'}$ such that for every $\mathbb{F}$ of characteristic $p$,
\[\bigcup_{G\in\mathcal{A}(n',M_{\mathrm{dim}}-1,M_{\mathrm{cmp}}',\mathbb{F})}\mathrm{dimirr}(G(\mathbb{F}))\subseteq\left\{m(|\mathbb{F}|)\middle|m\in T_{M_{\mathrm{dim}}-1,M_{\mathrm{cmp}}'}\right\}.\]
We want to find a finite set $T_{\mathrm{noniso},M_\mathrm{dim}}\subseteq\mathbb{Q}[x]$ such that for every finite field $\mathbb{F}$ of characteristic $p$,
\begin{align*}\bigcup_{\substack{G\in\mathcal{A}(n',M_\mathrm{dim},M_\mathrm{cmp},\mathbb{F})\\R_u(G)\neq 1,G}}&\{\dim\rho\mid\rho\in\Irr(G(\mathbb{F})),\Res^{G(\mathbb{F})}_{U(\mathbb{F})}\rho\text{ is non-isotypic}\}\\&\subseteq\{m(|\mathbb{F}|)\mid m\in T_{\mathrm{noniso},M_{\mathrm{dim}}}\}.\end{align*}
Suppose that $\rho$ has non-isotypic restriction to $U(\mathbb{F})$, hence $\rho=\mathrm{Ind}^{G(\mathbb{F})}_{K(\mathbb{F})}\psi$, for some proper algebraic subgroup $K\leq G$ defined over $\mathbb{F}$. Then \[\dim\rho=[G(\mathbb{F}):K(\mathbb{F})]\dim\psi.\]
We show that the index $[G(\mathbb{F}):K(\mathbb{F})]$ is given by finitely many polynomials: the coset space $G(\overline{\mathbb{F}})/K^\circ(\overline{\mathbb{F}})=(G/K^\circ)(\overline{\mathbb{F}})$ is a homogeneous space for the algebraic group $G(\overline{\mathbb{F}})$, therefore the number of $\mathbb{F}$-points, $f_{G/K^\circ}(|\mathbb{F}|):=|(G/K^\circ)(\mathbb{F})|$, is given by a $p$-PORC function by Proposition \ref{brion peyre}.
Further, we can apply the same argument to show that $f_{K/K^\circ}(|\mathbb{F}|):=|(K/K^\circ)(\mathbb{F})|$ is $p$-PORC, therefore
\[[G(\mathbb{F}):K(\mathbb{F})]=\frac{|G(\mathbb{F})/K^\circ(\mathbb{F})|}{|K(\mathbb{F})/K^\circ(\mathbb{F})|}\]
is also a $p$-PORC function of $|\mathbb{F}|$.
\par There may be infinitely many pairs $(G,K)$ that appear in this process; let them be indexed by $\{(G_i,K_i)\mid i\in I\}$. However, by \cite[Lemma 4.5.1]{hadas24}, there exist $C_{\mathrm{GC},G},C_{\mathrm{GC},K}>0$ such that for all $j\geq 1$, the sets $A_j:=\{|G_i(\mathbb{F}_{p^j})|\mid i\in I\}$ and $B_j:=\{|K_i(\mathbb{F}_{p^j})|\mid i\in I\}$ are finite, with $|A_j|\leq C_{\mathrm{GC},G},|B_j|\leq C_{\mathrm{GC},K}$. Therefore, for all $j\geq 1$,
\[|\{[G_i(\mathbb{F}_{p^j}):K_i(\mathbb{F}_{p^j})]\mid i\in I\}|\leq C_{\mathrm{GC},G}C_{\mathrm{GC},K}=:C_\mathrm{GC}.\]
Suppose without loss of generality that this bound is optimal, i.e. there exists $j$ such that $|\{[G_i(\mathbb{F}_{p^j}):K_i(\mathbb{F}_{p^j})]\mid i\in I\}|=C_\mathrm{GC}$. By Lemma \ref{brion peyre}, there exist bounds on the periods of $f_{G/K^\circ}$ and $f_{K/K^\circ}$ depending only on the ranks of $G$ and $K$ (both bounded above by $M_\mathrm{dim}$), therefore we can apply Lemma \ref{p-PORC lemma}. We conclude that there exists a finite set of polynomials $T_\mathrm{index}\subseteq\mathbb{Q}[x]$ with the property that for all $j\geq 1$,
\[\{[G_i(\mathbb{F}_{p^j}):K_i(\mathbb{F}_{p^j})]\mid i\in I\}\subseteq\{m(p^j)\mid m\in T_\mathrm{index}\}.\]
The representations in the non-isotypic case are now given by the finitely many polynomials
\[T_{\mathrm{noniso},M_{\mathrm{dim}}}=\left\{m_1(x)m_2(x)\middle|m_1\in T_\mathrm{index},m_2\in T_{M_{\mathrm{dim}}-1,C_{\mathrm{noniso}}(n',M_{\mathrm{cmp}})}\right\}.\]
\textbf{Step 3: Conclusion}\\
Since we already have defined $R_{n',0,M_{\mathrm{cmp}}}$, define $R_{n',M_{\mathrm{dim}},M_{\mathrm{cmp}}}$ inductively by:
\[R_{n',M_{\mathrm{dim}},M_{\mathrm{cmp}}}:=R_{n',M_{\mathrm{dim}}-1,M_{\mathrm{cmp}}}\cup T_{\mathrm{red},M_{\mathrm{dim}}}\cup T_{\mathrm{uni},M_{\mathrm{dim}}}\cup T_{\mathrm{iso},M_{\mathrm{dim}}}\cup T_{\mathrm{noniso},M_{\mathrm{dim}}}.\]
By the construction in the inductive step, $R_{n',M_{\mathrm{dim}},M_{\mathrm{cmp}}}$ has the required properties in the statement of the theorem.
\end{proof}
We can now prove a weak polynomial result for unramified extensions of a fixed base field:
\begin{corollary}\label{Weak Onn for towers}
Let $G$ be a smooth affine group scheme over $\mathbb{Z}$ and $r\geq 1$ be an integer. There exists $C>0$ such that for every $p>C$ and every non-Archimedean local field $F$ with finite residue field of characteristic $p$, there exists a finite set of polynomials $R\subseteq \mathbb{Q}[x]$ such that for every $\mathfrak{o}$ which is the valuation ring of some finite unramified extension of $F$,
\[\mathrm{dimirr}(G(\mathfrak{o}_r))\subseteq\left\{m(|\mathfrak{o}_1|)\middle|m\in R\right\}.\]
\end{corollary}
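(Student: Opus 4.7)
The plan is to reduce the corollary to Theorem \ref{hadas polynomial version} by means of the Greenberg functor, which packages $G(\mathfrak{o}'_r)$ for all unramified extensions $\mathfrak{o}'/\mathfrak{o}$ as the rational points of a single algebraic group over the residue field of $\mathfrak{o}$.

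Fix $F$ with valuation ring $\mathfrak{o}$ and residue field $k=\mathbb{F}_q$ of characteristic $p$. First I form the smooth affine $k$-group scheme $H:=\Gr^\mathfrak{o}_{r-1}(G\times_\mathbb{Z}\mathfrak{o}_r)$, which by construction satisfies $H(k)=G(\mathfrak{o}_r)$. For any finite unramified extension $\mathfrak{o}'/\mathfrak{o}$ with residue field $k'=\mathbb{F}_{q^d}$, Theorem \ref{greenberg functor and extensions of o} gives $H\times_k k'\cong\Gr^{\mathfrak{o}'}_{r-1}(G\times_\mathbb{Z}\mathfrak{o}'_r)$, hence $H(\mathbb{F}_{q^d})=G(\mathfrak{o}'_r)$. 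Thus all the groups appearing in the statement arise as $\mathbb{F}_{q^d}$-points of the single $k$-algebraic group $H$, for varying $d\geq 1$.

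Next I exhibit $H$ as a closed subgroup of some $\mathrm{GL}_{n'}$ over $k$ with controlled dimension and complexity. Fix once and for all a closed embedding $G\hookrightarrow\mathrm{GL}_n$ over $\mathbb{Z}$. Unravelling the explicit construction of the Greenberg functor---writing elements of $\mathfrak{o}_r$ in local coordinates $y_0+y_1\pi+\dots+y_{r-1}\pi^{r-1}$ for a uniformiser $\pi$ and expanding the generating polynomials of $G$ modulo $\pi^r$---produces an embedding of $H$ into $\mathrm{GL}_{n'}$ with $n'=rn$ such that $\dim H=r\dim G$ and $\cmp(H)$ are bounded by constants depending only on $G$ and $r$, independently of $\mathfrak{o}$. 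Applying Theorem \ref{hadas polynomial version} with these parameters yields a constant $C>0$ and, for each $p>C$, a finite set $R\subseteq\mathbb{Q}[x]$ covering $\dimirr(H(\mathbb{F}_{q^d}))=\dimirr(G(\mathfrak{o}'_r))$ for every $d\geq 1$, proving the corollary.

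The main obstacle I anticipate is this uniformity step: verifying that the Greenberg scheme of a fixed $\mathbb{Z}$-group has complexity and ambient embedding dimension bounded independently of $\mathfrak{o}$---in particular, independently of whether $\mathfrak{o}$ has mixed or equal characteristic. This amounts to inspecting the coordinate description of $\Gr^\mathfrak{o}_{r-1}$ in \cite{greenberg61}: the number of variables introduced is $rn^2$, and the defining equations are obtained by expanding a fixed finite family of integer polynomials in the chosen local coordinates, so both their number and their degrees depend only on $G$ and $r$.
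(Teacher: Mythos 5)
Your proposal follows essentially the same route as the paper: form the Greenberg scheme $\Gr^{\mathfrak{o}}_{r-1}(G\times_\mathbb{Z}\mathfrak{o}_r)$ over the residue field of $F$, use Theorem \ref{greenberg functor and extensions of o} to identify $G(\mathfrak{o}'_r)$ for each finite unramified extension $\mathfrak{o}'$ with the $\mathbb{F}_{q^d}$-points of this single algebraic group, place that group in some $\mathcal{A}(n',d,c,\mathbb{F}_q)$, and invoke Theorem \ref{hadas polynomial version}. Your additional discussion of why the ambient dimension and complexity are bounded uniformly in $\mathfrak{o}$ (regardless of equal or mixed characteristic) spells out a point the paper leaves implicit in the phrase ``contained in some $\mathcal{A}(n',d,c,\mathbb{F}_q)$'', and that uniformity is indeed what makes the constant $C$ independent of $F$.
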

\begin{proof}
Let $\mathscr{G}=\mathrm{Gr}_{r-1}^{\mathfrak{o}}(G\times_\mathbb{Z}\mathfrak{o}_r)$, an algebraic group over $\mathbb{F}_q$, where $q=|\mathfrak{o}_1|$. Then $G(\mathfrak{o}_r)\cong \mathscr{G}(\mathbb{F}_q)$. By Theorem \ref{greenberg functor and extensions of o}, all such $\mathscr{G}$ are given by extension of scalars from $\mathscr{G}_0:=\mathrm{Gr}_{r-1}^{\mathfrak{o}_F}(G)$. Therefore, all such $\mathscr{G}$ have the same extension of scalars to an algebraic group over $\overline{\mathbb{F}_p}$, which is contained in some $\mathcal{A}(n',d,c,\mathbb{F}_q)$. The result now follows from Theorem \ref{hadas polynomial version}.
\end{proof}
Note that the Greenberg functor does not preserve the property of group schemes being reductive, which is why we have to treat the non-reductive case in Theorem \ref{hadas polynomial version}.
\par With this in mind, we would like to extend the result to all non-Archimedean local fields, $F$, with finite residue field. It is already known that such $F$ must be isomorphic as a topological field to a finite extension of either $\mathbb{Q}_p$ or $\mathbb{F}_p((t))$. As it turns out, we can restrict our attention to the case where $F_0$ has ramification index less than $r$ over $\mathbb{Q}_p$ or $\mathbb{F}_p((t))$.
\begin{proposition}
Let $F$ be a finite extension of $\mathbb{Q}_p$ or $\mathbb{F}_p((t))$ with residue cardinality $q$ and ramification index $e\geq r$. Then $\mathfrak{o}_F/\mathfrak{p}_F^r\cong\mathbb{F}_q[[t]]/(t^r)$.
\end{proposition}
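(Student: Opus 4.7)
The plan is to reduce the statement to a structural fact about finite local rings of characteristic $p$ with residue field $\mathbb{F}_q$ and a principal maximal ideal of nilpotency index $r$. I will split into the two cases according to the characteristic of $F$.

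In the equal characteristic case, $F$ is a finite extension of $\mathbb{F}_p((t))$ with residue field $\mathbb{F}_q$, and the completeness of $\mathfrak{o}_F$ together with Hensel's lemma (lifting a primitive element of $\mathbb{F}_q/\mathbb{F}_p$) provides a coefficient field $\mathbb{F}_q \subseteq \mathfrak{o}_F$. Choosing any uniformizer $\pi \in \mathfrak{o}_F$ then yields $\mathfrak{o}_F \cong \mathbb{F}_q[[\pi]]$, and reducing modulo $\mathfrak{p}_F^r$ gives the conclusion immediately, with no hypothesis on $e$ needed.

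The mixed characteristic case is the substantive one. Write $A = \mathfrak{o}_F/\mathfrak{p}_F^r$. The key first step is to observe that $(p) = \mathfrak{p}_F^e$, so the hypothesis $e \geq r$ forces $p \in \mathfrak{p}_F^r$ and hence $A$ is an $\mathbb{F}_p$-algebra. Next I would lift the residue field: since $\mathbb{F}_q/\mathbb{F}_p$ is separable, the minimal polynomial of a generator of $\mathbb{F}_q$ splits over $A$ via Hensel's lemma applied to the complete local ring $\mathfrak{o}_F$ (and then reduced mod $\mathfrak{p}_F^r$), producing a ring embedding $\mathbb{F}_q \hookrightarrow A$ which is a section of the residue map. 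This realises $A$ as an $\mathbb{F}_q$-algebra.

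Finally, let $\bar\pi \in A$ denote the image of a uniformizer $\pi$ of $\mathfrak{o}_F$. The maximal ideal of $A$ is principal, generated by $\bar\pi$, with $\bar\pi^r = 0$, so the $\mathbb{F}_q$-algebra map $\mathbb{F}_q[t]/(t^r) \to A$ sending $t \mapsto \bar\pi$ is well defined and surjective (since $A$ is generated as an $\mathbb{F}_q$-module by $1,\bar\pi,\dots,\bar\pi^{r-1}$). A cardinality count gives $|A| = q^r = |\mathbb{F}_q[t]/(t^r)|$, so this surjection is an isomorphism. The main (minor) obstacle is justifying the lift of $\mathbb{F}_q$ inside $A$, but this is standard Hensel/étale-lifting once characteristic $p$ has been secured by the hypothesis $e \geq r$.
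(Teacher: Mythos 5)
Your proof is correct and is essentially the paper's argument: the paper also secures an $\mathbb{F}_q$-algebra structure on $\mathfrak{o}_F/\mathfrak{p}_F^r$ (via the maximal unramified subextension $L$, whose map $\mathfrak{o}_L/\mathfrak{p}_L\to\mathfrak{o}_F/\mathfrak{p}_F^r$ is exactly the Hensel/coefficient-field lift you construct, using $\mathfrak{p}_L\mathfrak{o}_F=\mathfrak{p}_F^e\subseteq\mathfrak{p}_F^r$) and then sends $t$ to a uniformizer to get a surjection from $\mathbb{F}_q[[t]]$ with kernel $(t^r)$. Your case split between equal and mixed characteristic and the surjection-plus-cardinality count are only cosmetic variants of that one uniform paragraph.
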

\begin{proof}
Let $L$ be the maximal unramified extension of $\mathbb{Q}_p$ contained in $F_0$. Write $\mathfrak{p}_L,\mathfrak{p}_F$ for the primes in $L,F$ respectively. We have natural maps
\[\mathbb{F}_q=\mathfrak{o}_{L}/\mathfrak{p}_L\to\mathfrak{o}_F/\mathfrak{p}_F^e\twoheadrightarrow\mathfrak{o}_F/\mathfrak{p}_F^r.\]
The first map is well-defined because $\mathfrak{p}_F^e=\mathfrak{p}_L\mathfrak{o}_F\supseteq\mathfrak{p}_L$, and the second natural map is well-defined because $e\geq r$. Therefore, $\mathfrak{o}_F/\mathfrak{p}_F^r$ is naturally an $\mathbb{F}_q$-algebra, and we can extend to a map
\[\mathbb{F}_q[[t]]\to\mathfrak{o}_F/\mathfrak{p}_F^r\]
by mapping $t\mapsto\varpi$. This latter map is surjective with kernel $(t^r)$.
\end{proof}
We can therefore ignore local fields of ramification index greater or equal to $r$ over $\mathbb{Q}_p$ or $\mathbb{F}_p((t))$.
\begin{proof}[Proof of Theorem \ref{weak polynomial for large p}]
Let $K$ be either $\mathbb{Q}_p$ or $\mathbb{F}_p((t))$. For the moment, fix a finite extension $F/K$ of ramification index $e<r$. We can assume $p$ is large enough so that $p$ does not divide $e$, so $F$ is tamely ramified over $K$. Write $F'=F^\mathrm{unr}=K(\sqrt[e]{p})$ and $\mathscr{G}_{\mathfrak{o}_F,r}=\Gr^{\mathfrak{o}_F}_{r-1}(G_{\mathfrak{o}_F/\mathfrak{p}_F^r})$. By Theorem \ref{greenberg functor and extensions of o} applied to the extension $\mathfrak{o}_{F'}/\mathfrak{o}_F$, we get
\begin{align*}
\mathscr{G}_{\mathfrak{o}_F,r}\times_{\mathbb{F}_{p^f}}\overline{\mathbb{F}_p}&\cong\Gr^{\mathfrak{o}_{F'}}_{r-1}(G_{\mathfrak{o}_F/\mathfrak{p}_F^r}\times_{\mathfrak{o}_F/\mathfrak{p}_F^r}\mathfrak{o}_{F'}/\mathfrak{p}_{F'}^r)\\
&\cong\Gr^{\mathfrak{o}_{F'}}_{r-1}(G_{\mathfrak{o}_{F'}/\mathfrak{p}_{F'}^r}),
\end{align*}
which does not depend on $F$. Therefore, if $F,F'$ are extensions of $K$ with ramification index $e$, then $\mathscr{G}_{\mathfrak{o}_F,r}$ and $\mathscr{G}_{\mathfrak{o}_{F'},r}$ have the same extension of scalars $\mathscr{H}$ to ${F^\mathrm{unr}=(F')^\mathrm{unr}=K^\mathrm{unr}(\sqrt[e]{p})}$. We can now repeat the argument of Corollary \ref{Weak Onn for towers}: since $\mathscr{H}$ is contained in some $\mathcal{A}(n',d,c,\mathbb{F}_q)$, there is a finite set $R_e\subseteq\mathbb{Q}[x]$ such that for every $\mathfrak{o}$ which is the valuation ring of an extension of $\mathbb{Q}_p$ or $\mathbb{F}_p((t))$ of ramification index $e$,
\[\dimirr(G(\mathfrak{o}_r))\subseteq\{m(|\mathfrak{o}_1|)\mid m\in R_e\}.\]
Let $R=\cup_{e=1}^{r-1}R_e$ for the result.
\end{proof}

Acknowledgements: The author was supported by the HIMR/UKRI Additional Funding Programme for Mathematical Sciences, EP/V521917/1. The author also gratefully acknowledges the supervision of Prof. Alexander Stasinski and Dr. Jack Shotton, and Daniel Funck and Paul Helminck for many enlightening conversations.
\bibliographystyle{plain}
\bibliography{Polynomial_Paper.bib}
\end{document}